\newtheorem{theorem}{{\sc Theorem}}[section]
\newtheorem{lemma}[theorem]{{\sc Lemma}}
\theoremstyle{remark}
\newtheorem{remark}[theorem]{{\sc Remark}}
\newtheorem{example}[theorem]{{\sc Example}}
\newcommand{\E}{\mathbb E}
\newcommand{\V}{\mathbb V}
\newcommand{\R}{\mathbb R}
\newcommand{\N}{\mathbb N}
\begin{document}
\title[Moderate deviations via cumulants]{Moderate deviations via cumulants}
\bigskip
\author[Hanna D\"oring, Peter Eichelsbacher]{}

\maketitle
\thispagestyle{empty}
\vspace{0.2cm}

\centerline{\sc Hanna D\"oring\footnote{Technische Universit\"at Berlin, Institut f\"ur Mathematik, 
MA 767, D-10623 Berlin, Germany, {\tt hdoering@math.tu-berlin.de  }}, 
Peter Eichelsbacher\footnote{Ruhr-Universit\"at Bochum, Fakult\"at f\"ur Mathematik, 
NA 3/68, D-44780 Bochum, Germany, {\tt peter.eichelsbacher@ruhr-uni-bochum.de  } \\Both authors have been supported by Deutsche Forschungsgemeinschaft via SFB/TR 12. The first
author was supportet by the international research training group 1339 of the DFG.}}
\vspace{0.5cm}
\centerline{({\it Dedicated to the memory of Tomasz Schreiber})}

\vspace{2 cm}

\pagenumbering{roman}
\maketitle

\pagenumbering{arabic}
\pagestyle{headings}
\bigskip

{ {\bf Abstract:} } The purpose of the present paper is to establish moderate deviation principles
for a rather general class of random variables fulfilling certain bounds of the cumulants. We apply
a celebrated lemma of the theory of large deviations probabilities due to Rudzkis, Saulis and Statulevicius.
The examples of random objects we treat include dependency graphs, subgraph-counting statistics in Erd\H{o}s-R\'enyi random graphs
and $U$-statistics. Moreover, we prove moderate deviation principles for certain statistics appearing in random matrix theory,
namely characteristic polynomials of random unitary matrices as well as the number of particles in a growing box of random determinantal point processes
like the number of eigenvalues in the GUE or the number of points in Airy, Bessel, and $\sin$ random point fields.

\bigskip
\bigskip

\section{Introduction}

Since the late seventies estimations of cumulants have not only been studied
to show convergence in law, but have been studied to investigate
a more precise asymptotic analysis of the distribution via
the rate of convergence and large deviation probabilities,
see e.g. \cite{SaulisStratulyavichus:1989} and references therein.
In \cite{ERS:2009} it has been shown how to relate these bounds to prove a
moderate deviation principle for a class of counting functionals in models of geometric probability.
This paper provides a general approach to show moderate deviation principles 
via cumulants.

Let $X$ be a real-valued random variable with existing absolute moments. Then
$$
\left. \Gamma_j := \Gamma_j(X) :=(-i)^j \frac{d^j}{dt^j} \log \E\bigl[e^{i t X}\bigr] \right|_{t=0}
$$
exists for all $j\in\mathbb N$ and the term is called the {\it $j$th cumulant}
(also called semi-invariant) of $X$. Here and in the following $\E$ denotes the expectation 
of the corresponding random variable.
The method of moments results in a method of cumulants, saying that
if the distribution of $X$ is determined by its moments and $(X_i)_i$ are random variables with finite moments
such that $\Gamma_j(X_n) \to \Gamma_j(X)$ as $n \to \infty$ for every $j \geq 1$, then $(X_i)_i$ converges in distribution to $X$.
Hence if the first cumulant of $X_n$ converges to zero, the second cumulant to one
as well as all cumulants of $X_n$ bigger than $2$ vanish, then the sequence $(X_n)_n$  satisfies a Central Limit Theorem (CLT). 
Knowing additionally exact bounds of the cumulants one is able to describe
the asymptotic behaviour more precisely.  Let $Z_n$ be a real-valued random
variable with mean $\E Z_n=0$ and variance $\V Z_n=1$ and
\begin{equation} \label{cum1}
|\Gamma_j(Z_n)| \leq \frac{(j!)^{1+\gamma}}{\Delta^{j-2}}
\end{equation}
for all $j=3,4, \ldots$ and all $n \geq 1$ for fixed $\gamma\geq 0$ and $\Delta>0$. 
Here and in the following $\V$ denotes the variance of the corresponding random variable.
Denoting the standard normal distribution function by
$$\Phi(x):=\frac{1}{\sqrt{2\pi}} \int_{-\infty}^x e^{-\frac{y^2}{2}}dy\,,$$
one obtains the following bound for the Kolmogorov distance
$$
\sup_{x\in\mathbb R}\bigl|P(Z_n\leq x)-\Phi(x)\bigr|
\leq c_{\gamma} \, \Delta^{\frac{1}{1+2\gamma}}
$$
where $c_{\gamma}$ is a constant depending only on $\gamma$, see \cite[Lemma 2.1]{SaulisStratulyavichus:1989}.
By this result, the distribution function $F_n$ of $Z_n$ converges uniformly to $\Phi$ as $n \to \infty$.
Hence, when $x=O(1)$ we have 
\begin{equation} \label{mainratio}
\lim_{n \to \infty} \frac{1-F_n(x)}{1 -\Phi(x)} = 1.
\end{equation} 
One is interested to have - under additional conditions - 
such a relation in the case when $x$ depends on $n$ and tends to $\infty$ as $n \to \infty$. In particular,
one is interested in conditions for which the relation \eqref{mainratio} holds in the interval 
$0 \leq x \leq f(n)$, where $f(n)$ is a non-decreasing function such that $f(n) \to \infty$. 
If the relation hold in such an interval, we call the interval a zone of normal convergence. In the case of
partial sums of i.i.d. random variables with zero mean and finite positive variance, it can be shown
applying Mill's ratios that $f(n)$ can be chosen as $(1- \varepsilon) (\log n)^{1/2}$ for any $0 < \varepsilon <1$,
if the third absolute moment of $X_1$ is assumed to be finite (see \cite[Lemma 5.8]{Petrov:book}). 
Moreover, \eqref{mainratio} cannot be true in general since for the symmetric binomial distribution the numerator
vanishes for all $x > \sqrt{n}$. For i.i.d. partial sums the classical result due to Cram\'er is that if $\E e^{t |X_1|^{1/2}} < \infty$
for some $t >0$, \eqref{mainratio} holds with $f(n) = o(n^{1/6})$.
In \cite[Chapter 2]{SaulisStratulyavichus:1989}, relations of large deviations of the type \eqref{mainratio} are proved under the
condition \eqref{cum1} on cumulants with a zone of normal convergence of size proportional to $\Delta^{\frac{1}{1 + 2 \gamma}}$, see
Lemma 2.3 in \cite{SaulisStratulyavichus:1989}. 

The aim of this paper is to show that under the same type of condition on cumulants of random variables $Z_n$ moderate deviation
principles can be deduced. Actually we will go the detour via large deviation probabilities, showing that under condition
\eqref{cum1}, the deducible results on large deviations probabilities imply a moderate deviation principle. For partial sums $S_n$ of i.i.d. random variables
$(X_i)_i$ one can find in \cite{LiRosalsky:2004} the remark, that large deviation probability results imply asymptotic expansions for tail probabilities
$P(S_n \geq n \E(X_1) + n^{1/2} x)$ and $P(S_n \leq n \E(X_1) - n^{1/2} x)$ for $x \geq 0$ and $x = o(n^{1/2})$ and moreover, that these expansions imply
a moderate deviation principle. We have not found the general statement proven in the literature, that large deviation probability results imply
in general a moderate deviation principle. Our abstract result, Theorem \ref{thmcumulants}, is motivated by various applications. 
We will prove moderate deviation principles for a couple of statistics applying Theorem \ref{thmcumulants}. Some results will be improvements
of existing results, most of our examples are new moderate deviation results.

Let us recall the definition of a large deviation principle (LDP) due to Varadhan, see for example
\cite{DemboZeitouni:1998}. A sequence of probability measures $\{(\mu_n), n\in \mathbb N\}$ on a
topological space $\mathcal X$ equipped with a $\sigma$-field $\mathcal
B$ is said to satisfy the LDP with speed $s_n\nearrow \infty$ and
good rate function $I(\cdot)$ if the level sets $\{x: I(x)\leq
\alpha\}$ are compact for all $\alpha\in[0,\infty)$ and for all
$\Gamma\in\mathcal B$ the lower bound
$$
\liminf_{n\to\infty}  \frac{1}{s_n} \log \mu_n(\Gamma)
\geq - \inf_{x\in \operatorname{int}(\Gamma)} I(x)
$$
and the upper bound
$$
\limsup_{n\to\infty}  \frac{1}{s_n} \log \mu_n(\Gamma)
\leq - \inf_{x\in \operatorname{cl}(\Gamma)} I(x)
$$
hold. Here $\operatorname{int}(\Gamma)$ and
$\operatorname{cl}(\Gamma)$ denote the interior and closure of
$\Gamma$ respectively. We say a sequence of random variables satisfies
the LDP when the sequence of measures induced by these variables
satisfies the LDP. Formally a moderate deviation principle is nothing
else but the LDP. However, we will speak about a moderate deviation 
principle (MDP) for a sequence of random variables, whenever the scaling
of the corresponding random variables is between that of an ordinary Law
of Large Numbers and that of a Central Limit Theorem.

The following main theorem of this paper generalizes the idea in
\cite{ERS:2009} to use the method of cumulants to investigate moderate deviation principles:

\begin{theorem}\label{thmcumulants}
For any $n \in {\Bbb N}$, let $Z_n$ be a centered random variable with variance one and existing
absolute moments, which satisfies
\begin{equation}\label{eqcumulants}
\bigl| \Gamma_j(Z_n) \bigr| \leq (j!)^{1+\gamma} / \Delta_n^{j-2}
\quad\text{for all } j=3,4, \dots
\end{equation}
for fixed $\gamma\geq 0$ and $\Delta_n>0$.
Let the sequence $(a_n)_{n \geq 1}$ of real numbers grow to infinity, but slow enough such that
$$
\frac{a_n}{\Delta_n^{1/(1+2\gamma)}} \stackrel{n\to\infty}{\longrightarrow} 0
$$
holds.
Then the moderate deviation principle for
$\bigl(\frac{1}{a_n} Z_n\bigr)_n$
with speed $a_n^2$ and rate function $I(x)=\frac{x^2}{2}$
holds true. 
\end{theorem}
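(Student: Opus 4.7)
The plan is to deduce the MDP from the sharp Cram\'er-type large-deviation asymptotics that the cumulant bound~\eqref{eqcumulants} already entails. Concretely, I would invoke Lemma~2.3 of \cite{SaulisStratulyavichus:1989}: under \eqref{eqcumulants} there are constants $c_\gamma,C_\gamma>0$ such that, uniformly for $0\le x\le c_\gamma \Delta_n^{1/(1+2\gamma)}$, one has
\begin{equation*}
\frac{1-F_n(x)}{1-\Phi(x)} = \exp\bigl(L_n(x)\bigr)\bigl(1+\theta_n(x)\bigr),\qquad \frac{F_n(-x)}{\Phi(-x)} = \exp\bigl(\widetilde L_n(x)\bigr)\bigl(1+\widetilde\theta_n(x)\bigr),
\end{equation*}
where $L_n,\widetilde L_n$ are the associated Cram\'er--Petrov series and
$$
|L_n(x)|+|\widetilde L_n(x)|\le C_\gamma\,(1+x)^3/\Delta_n^{1/(1+2\gamma)},\qquad |\theta_n(x)|+|\widetilde\theta_n(x)|\le C_\gamma\,(1+x)/\Delta_n^{1/(1+2\gamma)}.
$$

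The first step converts this into the exact logarithmic tail asymptotics needed for the MDP. Fix $t>0$ and substitute $x=a_n t$; the hypothesis $a_n/\Delta_n^{1/(1+2\gamma)}\to 0$ places $a_n t$ well inside the zone of normal convergence for all large $n$. Taking logarithms,
\begin{equation*}
\frac{1}{a_n^2}\log P(Z_n\ge a_n t) = \frac{1}{a_n^2}\log(1-\Phi(a_n t)) + \frac{L_n(a_n t)}{a_n^2} + \frac{1}{a_n^2}\log\bigl(1+\theta_n(a_n t)\bigr).
\end{equation*}
Mills' ratio yields $a_n^{-2}\log(1-\Phi(a_n t))\to -t^2/2$; the bounds above give $|L_n(a_n t)/a_n^2|=O(a_n/\Delta_n^{1/(1+2\gamma)})\to 0$ and $\theta_n(a_n t)\to 0$, killing the remaining terms. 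Hence
\begin{equation*}
\lim_{n\to\infty}\frac{1}{a_n^2}\log P(Z_n/a_n\ge t) = -\frac{t^2}{2},
\end{equation*}
and symmetrically $a_n^{-2}\log P(Z_n/a_n\le -t)\to -t^2/2$.

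The second step promotes these half-line asymptotics to the full MDP for $(Z_n/a_n)_n$ with rate $I(x)=x^2/2$. For a closed $F\subset\mathbb R$ set $d:=\inf\{|x|:x\in F\}$; if $d=0$ the upper bound is vacuous, otherwise $F\subset(-\infty,-d]\cup[d,\infty)$, and a union bound combined with the first step yields $\limsup a_n^{-2}\log P(Z_n/a_n\in F)\le -d^2/2=-\inf_F I$. For an open $G$ and any $t\in G$ with $t>0$, pick $\varepsilon\in(0,t)$ with $(t-\varepsilon,t+\varepsilon)\subset G$ and write $P(Z_n/a_n\in(t-\varepsilon,t+\varepsilon))$ as a difference of two tail probabilities; the first step shows that the tail at $t-\varepsilon$ dominates the one at $t+\varepsilon$ exponentially (since $(t+\varepsilon)^2>(t-\varepsilon)^2$), whence $\liminf a_n^{-2}\log P(Z_n/a_n\in G)\ge -(t-\varepsilon)^2/2$. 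Sending $\varepsilon\to 0$, infimizing over $t\in G$, and treating $t<0$ and $0\in G$ analogously gives the matching lower bound.

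The technical heart is that the zone of normal convergence \emph{and} the Cram\'er series bound are both controlled by the single scale $\Delta_n^{1/(1+2\gamma)}$: the hypothesis $a_n=o(\Delta_n^{1/(1+2\gamma)})$ is used simultaneously to keep $a_n t$ inside the Saulis--Stratulevicius zone and to force the cubic-in-$x$ Cram\'er correction $L_n(a_n t)$ to be of order $o(a_n^2)$. This coincidence is what makes the MDP fall out cleanly, and verifying that the Cram\'er correction is negligible under exactly this single scaling hypothesis is the main point one has to check carefully.
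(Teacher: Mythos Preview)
Your proposal is correct and follows essentially the same route as the paper: invoke the Saulis--Statulevi\v{c}ius expansion (Lemma~2.3 of \cite{SaulisStratulyavichus:1989}), use it together with Mills' ratio to get the exact logarithmic tail asymptotics $a_n^{-2}\log P(Z_n/a_n\ge t)\to -t^2/2$, and then upgrade to the full MDP. The only cosmetic differences are that the paper first packages the expansion into an intermediate estimate of the form $\bigl|\log\frac{P(Z_n\ge x)}{1-\Phi(x)}\bigr|\le C_2(\gamma)(1+x^3)/\Delta_n^{1/(1+2\gamma)}$ (this is where the uniform boundedness of the factor $\psi(x)$ in the Saulis--Statulevi\v{c}ius lemma is checked carefully, which your bound on $\theta_n$ tacitly assumes), and it delegates the final promotion to the full MDP to \cite[Theorem~1.2]{ERS:2009} rather than writing out the closed-set/open-set argument as you do.
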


\noindent
The Theorem opens up the possibility to prove moderate deviations for a wide range
of dependent random variables. Before we will proceed, we will consider a moderate deviation principle 
for partial sums of independent, non-identically distributed random variables. Interesting enough, we have not
find any reference for the following result. 

\begin{theorem} \label{thmnotidentical}
Let $(X_i)_{i \geq 1}$ be a sequence of independent real-valued random variables with expectation zero and variances
$\sigma_i^2>0$, $i \geq 1$, and let us assume that $\gamma\geq 0$ and $K>0$ exist such that for all $i \geq 1$
\begin{equation}\label{momentenbedingungen}
\bigl| \E X_i^j\bigr| \leq (j!)^{1+\gamma} K^{j-2} \sigma_i^2
\quad\text{for all } j=3,4, \dots\, .
\end{equation}
Let $Z_n:=\frac{1}{\sqrt{\sum_{i=1}^n \sigma_i^2}}\sum_{i=1}^n X_i$. Then 
$\bigl(\frac{1}{a_n} Z_n \bigr)_{n \geq 1}$ satisfies the moderate
deviation principle with speed $a_n^2$ and rate function $\frac{x^2}{2}$ for any 
$1\ll a_n\ll \left(\frac{\sqrt{\sum_{i=1}^n \sigma_i^2}}{\displaystyle{2 \max\bigl\{K; \max_{1\leq i\leq n}}\{\sigma_i\}\bigr\}}\right)^{1/(1+2\gamma)}$.
\end{theorem}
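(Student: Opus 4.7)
The plan is to reduce Theorem \ref{thmnotidentical} to Theorem \ref{thmcumulants} by producing a cumulant estimate of the form \eqref{eqcumulants} for $Z_n$, with $\Delta_n$ matching the zone claimed in the statement.

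First I would exploit the fact that cumulants are additive on independent summands and $j$-homogeneous under scaling: writing $B_n := \sqrt{\sum_{i=1}^n \sigma_i^2}$, one has
$$\Gamma_j(Z_n) = \frac{1}{B_n^j}\sum_{i=1}^n \Gamma_j(X_i).$$
Hence the task reduces to a single-variable bound on $|\Gamma_j(X_i)|$ that can be summed cleanly against $\sum_i \sigma_i^2 = B_n^2$.

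Next I would pass from the moment bound \eqref{momentenbedingungen} to a cumulant bound using the standard partition-sum representation
$$\Gamma_j(X_i) = \sum_{\pi\in\mathcal P(j)} (-1)^{|\pi|-1}(|\pi|-1)!\prod_{B\in\pi} \E X_i^{|B|}.$$
Since $\E X_i = 0$, only partitions with all blocks of size $\geq 2$ survive, and for those blocks \eqref{momentenbedingungen} applies (the case $|B|=2$ holds trivially with the same constants). Multiplying the bounds over the blocks of a fixed $\pi$ yields a factor $K^{j-2|\pi|}\sigma_i^{2|\pi|}$, and pulling out one $\sigma_i^2$ one can dominate the remaining exponents by $\bigl(\max\{K,\sigma_i\}\bigr)^{j-2}$. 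Absorbing the combinatorial sum $\sum_{\pi}(|\pi|-1)!\prod_B(|B|!)^{1+\gamma}$ into a geometric $C^{j-2}$ factor and doubling $\max\{K,\sigma_i\}$ to soak up the resulting absolute constant, I would obtain
$$|\Gamma_j(X_i)| \leq (j!)^{1+\gamma}\bigl(2\max\{K,\sigma_i\}\bigr)^{j-2}\sigma_i^2.$$
This is essentially Lemma~3.1 of \cite{SaulisStratulyavichus:1989}, applied with $\Delta=\sigma_i^2$ and $H=K$.

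Summing this bound over $i$ and invoking $\sum_i\sigma_i^2 = B_n^2$ then produces
$$|\Gamma_j(Z_n)| \leq \frac{(j!)^{1+\gamma}(2H_n)^{j-2}}{B_n^j}\sum_{i=1}^n \sigma_i^2 = \frac{(j!)^{1+\gamma}}{\Delta_n^{j-2}}, \qquad \Delta_n := \frac{B_n}{2\max\{K,\max_{1\leq i\leq n}\sigma_i\}},$$
so that \eqref{eqcumulants} holds with this $\Delta_n$. Theorem \ref{thmcumulants} then delivers the MDP with speed $a_n^2$ and rate $x^2/2$ on the zone $a_n \ll \Delta_n^{1/(1+2\gamma)}$, which is precisely the range stated.

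The main obstacle is the second step: one must track constants carefully enough to recover exactly the prefactor $2\max\{K,\max_i\sigma_i\}$ in the denominator of the zone. The crux is the combinatorial estimate on $\sum_{\pi\in\mathcal P(j)}(|\pi|-1)!\prod_B(|B|!)^{1+\gamma}$, which has to collapse into a geometric growth $C^{j-2}$ in $j$ rather than faster; any looser bound would enlarge $H_n$ and shrink the admissible moderate-deviation window.
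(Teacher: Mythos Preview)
Your proposal is correct and follows essentially the same route as the paper: pass from the moment hypothesis \eqref{momentenbedingungen} to the individual cumulant bound $|\Gamma_j(X_i)|\le (j!)^{1+\gamma}(2\max\{K,\sigma_i\})^{j-2}\sigma_i^2$ via the moment--cumulant relation (the paper simply cites \cite[Theorem 3.1]{SaulisStratulyavichus:1989} for this, whereas you sketch the partition-sum argument), then use additivity of cumulants under independence to obtain \eqref{eqcumulants} with $\Delta_n=B_n/(2\max\{K,\max_i\sigma_i\})$ and invoke Theorem~\ref{thmcumulants}. The only substantive difference is that the paper outsources your ``main obstacle'' (the combinatorial constant-tracking) entirely to the Saulis--Statulevi{\v c}ius reference rather than re-deriving it.
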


Remark that condition \eqref{momentenbedingungen} is a generalization of the classical Bernstein condition ($\gamma =0$).
\begin{proof}
Using a relation between moments and cumulants, condition \eqref{momentenbedingungen} implies that the $j$-th cumulant of $X_i$ 
can be bounded by $(j!)^{1 + \gamma} (2 \max \{K, \sigma_i \})^{j-2} \sigma_i^2$. Hence it follows from the independence of the random variables $X_i$, $i \geq 1$,
that the $j$-th cumulant of $Z_n$ has the bound
\begin{equation}\label{cumulantenbedinungen}
|\Gamma_j(Z_n)| \leq (j!)^{1+\gamma} \left(
\frac{2 \max\bigl\{K; \max_{1\leq i\leq n}\{\sigma_i\}\bigr\}}{\sqrt{\sum_{i=1}^n \sigma_i^2}}\right)^{j-2} 
\, ,
\end{equation}
for details see for example \cite[Theorem 3.1]{SaulisStratulyavichus:1989}.
Thus for $Z_n$ the condition of Theorem \ref{thmcumulants} holds with
$$
\Delta_n=\frac{\sqrt{\sum_{i=1}^n \sigma_i^2}}{\displaystyle{2 \max\bigl\{K; \max_{1\leq i\leq n}\{\sigma_i\}\bigr\}}}\,.
$$
The result follows from Theorem \ref{thmcumulants}.
\end{proof}

\begin{remark}
If Cram\'er's condition holds, that is there
exists $\lambda>0$ such that $\E e^{\lambda |X_i|} < \infty$
holds for all $i\in\mathbb N$, then $X_i$ satisfies Bernstein's condition, which is the bound \eqref{momentenbedingungen} 
with $\gamma=0$, see for example \cite[Remark 3.6.1]{Yurinsky:1995}.
This implies \eqref{cumulantenbedinungen} and we can apply Theorem \ref{thmcumulants}
as above. Therefore Theorem \ref{thmcumulants} requires less restrictions on the random sequence
than Cram\'er's condition.
\end{remark}

The paper is organized as follows. Section 2 is devoted to applications for so called {\it dependency graphs}
including counting-statistics of subgraphs in Erd\H{o}s-R\'enyi random graphs. Section 3 presents applications to $U$-statistics.
Theorem \ref{thmcumulants} and Theorem \ref{thmnotidentical} will be applied in random matrix theory in Section 4. We will be able to reprove
moderate deviations for the characteristic polynomials for the COE, CUE and CSE matrix ensembles. Moreover
we will prove moderate deviations for determinantal point processes with applications in random matrix theory.
Finally, in Section 5 we present the proof of Theorem \ref{thmcumulants}.

\section{Applications to dependency graphs}\label{Application to dependency graphs}

Let ${\{X_{\alpha}\}}_{\alpha\in \mathcal I}$ be a family of random variables defined
on a common probability space. A {\it dependency graph} for ${\{X_{\alpha}\}}_{\alpha\in \mathcal I}$
is any graph $L$ with vertex set $\mathcal I$ which satisfies the following condition:
For any two disjoint subsets of vertices $V_1$ and $V_2$ such that there is no edge from any vertex in
$V_1$ to any vertex in  $V_2$, the corresponding collections of random
variables $\{X_{\alpha}\}_{\alpha\in V_1}$ and $\{X_{\alpha}\}_{\alpha\in V_2}$ are independent.

Let the {\it maximal degree} of a dependency graph $L$ be the maximum of the
number of edges coinciding at one vertex of $L$. The idea behind the usefulness of dependency graphs is that if the maximal degree
is not too large, one expects a Central Limit Theorem for the partial sums of the family  ${\{X_{\alpha}\}}_{\alpha\in \mathcal I}$.
We will consider moderate deviations. Note that there does not exist a unique dependency graph, for example the complete graph works for any set of random variables.

\begin{example} \label{stexample}
A standard situation is, that there is an underlying family of independent random variables $\{Y_i\}_{i \in \mathcal A}$, and each $X_{\alpha}$
is a function of the variables $\{Y_i\}_{i \in \mathcal A_{\alpha}}$, for some $\mathcal A_{\alpha} \subset \mathcal A$. With $\mathcal S = \{ \mathcal A_{\alpha}: 
\alpha \in \mathcal I \}$ the graph $L=L(\mathcal S)$ with vertex set $\mathcal I$ and edge set $\{ \alpha \beta: A_{\alpha} \cap A_{\beta} \not= \emptyset \}$
is a dependency graph for the family ${\{X_{\alpha}\}}_{\alpha\in \mathcal I}$. As a special case of this example, we will consider subgraphs
of an Erd\H{o}s-R\'enyi random graph. 
\end{example}

Another context, outside the scope of the present paper, in which dependency graphs are used is the Lov\'asz Local Lemma,
see \cite{AlonSpencer08}. Central limit theorems for $Z:=\sum_{\alpha \in \mathcal I} X_{\alpha}$ are obtained in \cite{BaldiRinott:1989}, see \cite[Theorem 9.6]{Steinbuch2010}
for corresponding Berry-Esseen bounds. We obtain the following bounds on cumulants of $Z$:

\begin{theorem}\label{lemmacumulantsrg}
Suppose that $L$ is a dependency graph for the family $\{X_{\alpha}\}_{\alpha \in \mathcal I}$ and that
$M$ is the maximal degree of $L$. Suppose further that $|X_{\alpha}| \leq A$ almost surely
for any $\alpha \in \mathcal I$ and some constant $A$. Let $\sigma^2$ be the variance of $Z:=\sum_{\alpha \in \mathcal I} X_{\alpha}$.
Then the cumulants $\Gamma_j$ of $\frac{Z}{\sigma}$ are bounded by
\begin{equation}\label{eqcumulantRG}
\bigl|\Gamma_j\bigr|\leq
(j!)^3 |\mathcal I| \, (M+1)^{j-1} \, (2eA)^j \frac{1}{\sigma^j}
\end{equation}
for all $j\geq 1$.
\end{theorem}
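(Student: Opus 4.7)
The plan is to combine three classical ingredients: multilinearity of cumulants, the vanishing of joint cumulants on independent blocks, and a combinatorial count of connected subgraphs in the dependency graph $L$.

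First, by multilinearity of cumulants, one writes
\begin{equation*}
\Gamma_j(Z) \;=\; \sum_{(\alpha_1,\dots,\alpha_j)\in \mathcal{I}^j} \operatorname{Cum}(X_{\alpha_1},\dots,X_{\alpha_j}).
\end{equation*}
The key observation is that if the vertex set $\{\alpha_1,\dots,\alpha_j\}$ admits a partition into two non-empty subsets $V_1,V_2$ with no edge of $L$ between them, then by the definition of a dependency graph the collections $\{X_\alpha\}_{\alpha\in V_1}$ and $\{X_\alpha\}_{\alpha\in V_2}$ are independent, and hence the joint cumulant vanishes. Consequently, the sum reduces to tuples whose support is connected in $L$.

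Next, I would bound the contribution of each surviving tuple. Because $|X_\alpha|\le A$ almost surely, the moment-cumulant formula
\begin{equation*}
\operatorname{Cum}(X_{\alpha_1},\dots,X_{\alpha_j})\;=\;\sum_{\pi\in\mathcal P(j)}(-1)^{|\pi|-1}(|\pi|-1)!\prod_{B\in\pi}\E\Big[\prod_{i\in B}X_{\alpha_i}\Big]
\end{equation*}
yields the uniform estimate $|\operatorname{Cum}(X_{\alpha_1},\dots,X_{\alpha_j})|\le C_1^{\,j}\, j!\, A^j$ for an explicit constant $C_1$, absorbing the combinatorial factor coming from the partition sum.

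For the combinatorial count, I would fix an "anchor" vertex $\alpha\in\mathcal{I}$ and count $j$-tuples in $\mathcal I^j$ whose support is a connected subset of $L$ containing $\alpha$. Splitting by the support size $k$ and using that (i) in a graph of maximum degree $M$, the number of connected subsets of size $k$ containing $\alpha$ is bounded via a spanning-tree argument by $(eM)^{k-1}/(k-1)!$ (Borgs--Chayes type bound), and (ii) the number of surjections $[j]\twoheadrightarrow [k]$ is $k!\,S(j,k)$ which is bounded using standard Stirling number estimates, one obtains a total bound of the shape $|\mathcal I|\,(M+1)^{j-1}\,j!$, with a residual factorial coming from the surjection count. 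After summing over the anchor vertex (or rather, noting the overcounting by the size of the support), combining the per-tuple cumulant bound $C_1^j\,j!\,A^j$ with the tuple count and collecting all factorial factors produces the claimed $(j!)^3\,|\mathcal I|\,(M+1)^{j-1}(2eA)^j$; finally dividing by $\sigma^j$ via the homogeneity $\Gamma_j(Z/\sigma)=\sigma^{-j}\Gamma_j(Z)$ gives the statement.

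The main obstacle is the combinatorial step, namely keeping the constants sharp enough that the three factorial factors, the $(M+1)^{j-1}$, and the $(2eA)^j$ all come out with the right constants. In particular, the interplay between the Borgs--Chayes bound on connected subsets of size $k$, the Stirling-number bound on surjections from $[j]$ onto a $k$-set, and the $2^j$ factor from the alternating-sign moment-cumulant sum has to be arranged carefully so that the $e$ and the $M+1$ both show up in the final answer with the correct exponents.
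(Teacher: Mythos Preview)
Your overall strategy---multilinearity, vanishing of mixed cumulants on disconnected index sets, a per-tuple bound via the Leonov--Shiryaev moment--cumulant formula, and a count of connected $j$-tuples in $L$---is exactly the paper's. The one substantive difference is the combinatorial count. Instead of your Borgs--Chayes-plus-surjections route, the paper argues directly: after reordering a connected $j$-tuple so that every prefix is connected, vertex $v_i$ must equal one of the $i-1$ previous vertices or be a neighbor of one of them, giving at most $(i-1)(M+1)$ choices; multiplying over $i$ and by the $j!$ reorderings yields $j!(j-1)!\,|\mathcal I|\,(M+1)^{j-1}$ immediately. This sequential argument is what delivers the $(M+1)^{j-1}$ with the ``$+1$'' and the two clean factorial factors, and it sidesteps the constant-tracking you flag as the main obstacle. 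One caution on your route: the bound $(eM)^{k-1}/(k-1)!$ you quote for the number of connected $k$-subsets through a fixed vertex is too optimistic---the standard spanning-tree/DFS bound is of order $(eM)^{k-1}$ without the factorial in the denominator---so if you pursue your version you will need to recover the missing factorial elsewhere. The per-tuple estimate $j!(2eA)^j$ (your ``$C_1=2e$'') is obtained in the paper by bounding the Stirling-number sum $\sum_q (q-1)!\,S(j,q)$ via $\binom{j}{\lfloor j/2\rfloor}$ and Stirling's formula.
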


\begin{proof}
For notational reasons we consider without loss of generality the case where the index set $\mathcal I$ is chosen to be
$\mathcal I=\{1,\dots,N\}$ for any fixed natural number $N\in\N$. In \cite[Lemma 4]{Janson:1988} bounds for
the cumulants were given. Our main task is to obtain a bound, which gives the dependency of $j$ (and $j!$) as exact as possible. The first steps of our proof
can exactly be found in \cite[Lemma 4]{Janson:1988}.
Assuming the existence of the $m$-th moments of $X_1,\dots,X_j$ define the multi-linear function
$$
\kappa(X_1,\dots,X_j) := (-i)^j \frac{\partial^j}{\partial t_1 \cdots \partial t_j} \log \E \bigl[\exp(i t_1 X_1)\cdots \exp(i t_j X_j)\bigr] \Big|_{(t_1,\dots,t_j)=(0,\dots,0)} \,.
$$
Per definition for any random variable $X$ the cumulant is given by $\Gamma_j(X)=\kappa(\underbrace{X,\dots,X}_{j \text{ times}})$
and for the cumulant of $\frac{Z}{\sigma}$ we have
\begin{equation}
\Gamma_j
= \kappa\Bigl(\underbrace{\sum_{i=1}^{N}X_i,\dots,\sum_{i=1}^{N}X_i}_{j \text{ times}}\Bigr) \frac{1}{\sigma^j}
= \sum_{{i_1}=1}^{N}\dots  \sum_{{i_j}=1}^{N} \kappa(X_{i_1},\dots, X_{i_j}) \frac{1}{\sigma^j}\,.
\label{eqGesamtausdruckKumulante}
\end{equation}
Suppose that $X_{1},\dots,X_{m}$ are independent of $X_{m+1},\dots,X_{j}$
for any $1\leq m<j$, then
\begin{eqnarray*}
\kappa(X_{1},\dots, X_{j})
&=&
 (-i)^j \frac{\partial^j}{\partial t_1 \cdots \partial t_j} \log \E \bigl[\exp(i t_{1} X_{1})\cdots \exp(i t_{j} X_{j})\bigr] \Big|_{(0,\dots,0)}
\\
&=&
 (-i)^j \frac{\partial^j}{\partial t_{1} \cdots \partial t_{j}}
\log \E \bigl[\exp(i t_1 X_1)\cdots \exp(i t_m X_m)\bigr]\Big|_{(0,\dots,0)}
\\
&&{}+ (-i)^j \frac{\partial^j}{\partial t_{1} \cdots \partial t_{j}}
 \log \E \bigl[\exp(i t_{m+1} X_{m+1})\cdots \exp(i t_j X_j)\bigr] 
 \Big|_{(0,\dots,0)} 
\\&=& 0\,.
\end{eqnarray*}
Thus in \eqref{eqGesamtausdruckKumulante} we only have to consider those terms $\kappa(X_{i_1},\dots,X_{i_j})$ for which the corresponding $j$ vertices of $L$ (not necessarily 
distinct) form a connected subgraph.

For $1\leq q\leq j$, let $\sum_{I_1,\dots,I_q}$ denote the summation over all partitions
$I_m$ of $\{1,\dots,j\}$ into $m$ nonempty subsets, $1\leq m\leq q$.
The representation of a cumulant in \cite[Eq. (1.57)]{SaulisStratulyavichus:1989},
which was derived by Leonov and Shiryaev in 1959 via Taylor's expansion, 
gives
\begin{eqnarray}
\kappa(X_{i_1},\dots, X_{i_j})
&=& \sum_{q=1}^j \sum_{I_1,\dots,I_q}
  \left| (-1)^{q-1} (q-1)! \prod_{m=1}^q \E\left[\prod_{r\in I_m} X_{i_r}\right] \right|
  \nonumber\\
&\leq& \sum_{q=1}^j \sum_{I_1,\dots,I_q}
  \left| (-1)^{q-1} (q-1)! \prod_{m=1}^q \prod_{r\in I_m} \|X_{i_r}\|_{m_i}\bigr] \right|
\label{eqcumulantshoelder}
\end{eqnarray}
applying H\"older's inequality with $\sum_{i\in I_m}\frac{1}{m_i}=1$
and symbolizing $\bigl( \E |X_{i_r}|^{m_i}\bigr)^{1/m_i}$ by $\|X_{i_r}\|_{m_i}$.
Choosing $m_i=|I_m|$ and using the fact that $\|X_{i_r}\|_{m_i}\leq \|X_{i_r}\|_j$
for $m_i\leq j$ implies 
\begin{eqnarray}
\kappa(X_{i_1},\dots, X_{i_j})
&\leq&
\sum_{q=1}^j \sum_{I_1,\dots,I_q} \left| (-1)^{q-1} (q-1)! \prod_{m=1}^q \prod_{r\in I_m} \|X_{i_r}\|_j \right|
\nonumber
\\
&=&
\|X_{i_1}\|_j \cdots \|X_{i_j}\|_j \sum_{q=1}^j \sum_{I_1,\dots,I_q} \left| (-1)^{q-1} (q-1)! \right|
\,.\label{eqforeachcumulant}
\end{eqnarray}
The number of partitions of an set containing $j$ elements into $q$ parts is the Stirling number
$$\frac{1}{q!} \sum_{m=0}^q (-1)^{q-m} \left(q\atop m\right) m^j\,.$$
And inequality \eqref{eqforeachcumulant} implies
$$
\kappa(X_{i_1},\dots, X_{i_j})
\leq
\|X_{i_1}\|_j \cdots \|X_{i_j}\|_j \sum_{q=1}^j \frac{(q-1)!}{q!} \sum_{m=0}^q  \left(q\atop m\right) m^j \,.
$$
Since
$$\frac{1}{q} \sum_{m=0}^q  \left(q\atop m\right) m^j
\leq  \sum_{m=1}^q  \left(q\atop {\lfloor q/2\rfloor}\right) q^{j-1} 
= \left(q\atop {\lfloor q/2\rfloor}\right) q^j
\leq j^j \left(j\atop {\lfloor j/2\rfloor}\right)
$$
holds, we can apply $\left(j\atop {\lfloor j/2\rfloor}\right)
= \frac{j!}{ ({\lfloor j/2\rfloor})! ({\lceil j/2\rceil})!}
\leq \frac{j!}{ ({\lfloor j/2\rfloor})!^2}$
and the Stirling approximation $m! > \sqrt{2 \pi m} \left(\frac{m}{e}\right)^m$ to get
\begin{eqnarray}
\kappa(X_{i_1},\dots, X_{i_j})
&\leq&
\|X_1\|_j \cdots \|X_j\|_j \cdot j^{j+1} \frac{j!}{2\pi \frac{j}{2} \left(\frac{j}{2e}\right)^{2j/2}}
\nonumber\\
&\leq&
\|X_1\|_j \cdots \|X_j\|_j \cdot j! \cdot (2e)^j 
\quad \leq \quad j! (2e A)^j \,.
\label{eqcumulantsfastfertig}
\end{eqnarray}

Now we need to know the number of possible sets of $j$ vertices forming
a connected subgraph of $L$. If $v_1,\dots,v_j$ are $j$ such vertices,
then we can rearrange the indices such that each set $\{v_1,v_2\}$,
$\{v_1,v_2,v_3\}, \dots, \{v_1,\dots,v_j\}$ forms itself a connected subgraph
of $L$. There are at most $j!$ tuples of $j$ vertices associated to the
same ordering. 
There are $N$ ways of choosing $v_1$. The vertex $v_2$ must equal $v_1$ or
be connected to $v_1$, for which we have the choice of at most $M$ possible
vertices.
Similarly, $v_3$ either equals $v_1$ or $v_2$ or is connected to one of them.
For this choice we have at most $2+2 M=2(M+1)$ possibilities.
Continuing this way we see that there are at most
$$
j! N (M+1) 2(M+1) \cdots (j-1)(M+1)
=j! (j-1)! N (M+1)^{j-1}
$$
choices of $j$ vertices forming a connected subgraph in $L$.

Inserting this estimation and the bound in \eqref{eqcumulantsfastfertig}
into equation \eqref{eqGesamtausdruckKumulante} completes the proof of Theorem
\ref{lemmacumulantsrg}.
\end{proof}

\subsection{Subgraphs in Erd\H{o}s-R\'enyi random graphs}

Consider an Erd\H{o}s-R{\'e}nyi random graph with $n$ vertices, where
for all $\left(n \atop 2\right)$ different pairs of vertices the
existence of an edge is decided by an independent Bernoulli experiment
with probability $p$. For each $i\in\{1,\dots,\left({{n}\atop{2}}\right)\}$,
let $X_i$ be the random variable determining if the edge $e_i$ is present,
i.e. $P(X_i=1)=1-P(X_i=0)=p(n) =:p$. The model is called ${\Bbb G}(n,p)$.
The following statistic counts the number of subgraphs isomorphic to a fixed
graph $G$ with $k$ edges and $l$ vertices
\begin{equation} \label{wdef}
W=
\sum_{1\leq \kappa_1<\dots < \kappa_k \leq \left({{n}\atop{2}}\right)}
  1_{\{(e_{\kappa_1},\dots,e_{\kappa_k})\sim G\}} \left(\prod_{i=1}^k X_{\kappa_i} \right)
\:.
\end{equation}
Here $(e_{\kappa_1}, \ldots, e_{\kappa_k})$ denotes the graph with edges
$e_{\kappa_1}, \ldots, e_{\kappa_k}$ present and $A \sim G$ denotes the fact that the subgraph $A$
of the complete graph is isomorphic to $G$. Here and in the following we speak about connected subgraphs only.
Let the constant $a := \rm{aut}(G)$ denote the order of the automorphism group of $G$.
The number of copies of $G$ in $K_n$, the complete graph with $n$
vertices and $\left(n \atop 2\right)$ edges, is given by
$\left(n \atop l\right) l!/a$ and the expectation of $W$ is equal to
$\E[W] = \frac{\left(n \atop l\right) l!}{a} p^k = {\mathcal O}(n^l p^k) \:$.
It is easy to see that $P(W> 0)= o(1)$ if $p\ll n^{-l/k}$.
Moreover, for the graph property that $G$ is a subgraph, the probability that
a random graph possesses it jumps from $0$ to $1$ at the threshold probability
$n^{-1/m(G)}$, where $m(G)= \max \left\{ \frac{e_H}{v_H} : H\subseteq G, v_H >0 \right\}$,
$e_H, v_H$ denote the number of edges and vertices of $H\subseteq G$,
respectively, see \cite{JLR:2000}.
Ruci{\'n}ski proved in \cite{Rucinski:1988} that
$\frac{W-\E(W)}{\sqrt{\V(W)}}$
converges in distribution to a standard normal distribution if and only if
\begin{equation}
n p^{m(G)}\stackrel{n\to\infty}{\longrightarrow}  \infty
\quad\text{and}\quad
n^2 (1-p) \stackrel{n\to\infty}{\longrightarrow}  \infty\:.
\end{equation}
An upper bound for {\it lower tails} was proven by Janson \cite{Janson:1990}, applying the FKG-inequality. A comparison of {\it seven} different
techniques proving bounds for {\it the infamous} upper tail can be found in \cite{JansonRucinski:2002}, see also \cite{Chatterjee:2010} 
for a recent improvement. The large deviation principle for subgraph count statistics in Erd\H{o}s-R\'enyi random graphs with
fixed $p$ are solved in \cite{ChatterjeeVaradhan:2010}.

As a special case of Example \ref{stexample}, let $\{H_{\alpha}\}_{\alpha \in \mathcal I}$ be given subgraphs of the complete graph $K_n$ and let $I_{\alpha}$
be the indicator that $H_{\alpha}$ appears as a subgraph in ${\Bbb G}(n,p)$, that is, $I_{\alpha} = 1_{\{ H_{\alpha} \subset {\Bbb G}(n,p) \}}$, $\alpha \in \mathcal I$.
Then $L(S)$ with $S = \{e_{H_{\alpha}} : \alpha \in \mathcal I\}$ is a dependency graph with edge set $\{ \alpha \, \beta: e_{H_{\alpha}} \cap e_{H_{\beta}} \not= \emptyset \}$.
Here we take the family of subgraphs of $K_n$ that are isomorphic to a fixed graph $G$, denoting by $\{G_{\alpha}\}_{\alpha \in A_n}$. 
Consider $X_{\alpha} = I_{\alpha} - {\Bbb E} I_{\alpha}$ and define the graph $L_n$ by connecting every pair of indices $\alpha$ and $\beta$ such that the corresponding
graphs $G_{\alpha}$ and $G_{\beta}$ have a common edge. This is evidently a dependency graph for $(X_{\alpha})_{\alpha \in A_n}$; see \cite[Example 6.19]{JLR:2000}.
Note that the subgraph count statistic $W-\E W$ given in \eqref{wdef} is equal to the sum of all $X_{\alpha}$, $1\leq \alpha\leq A_n$.

We will be able to prove the following moderate deviation principle for the subgraph count statistic:

\begin{theorem}\label{thmMDPtriangle}
Let $G$ be a fixed graph with $k$ edges and $l$ vertices.
Let $(a_n)_n$ be a sequence with
$$
1 \ll a_n \ll
\Bigl(\frac{n \bigl(p^{k-1} \sqrt{p (1-p)}\bigr)^3}{8 k^2 e^3}\Bigr)^{1/5}
\,,
$$
where $e$ is Euler's number.
Then the scaled subgraph count statistics
$\bigl(\frac{1}{a_n} \frac{W-\E W}{\sqrt{\V W}}\bigr)_n$
satisfy the moderate deviation principle with speed $a_n^2$ and rate
function $x^2/2$ if
\begin{equation}\label{eqprobbedingung}
n^2 p^{3(2k-1)} (1-p)^3\stackrel{n\to \infty}{\longrightarrow} \infty
\end{equation}
holds.
\end{theorem}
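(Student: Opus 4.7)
The plan is to apply Theorem \ref{thmcumulants} with $\gamma=2$ to the sequence $Z_n := (W-\E W)/\sqrt{\V W}$, using the cumulant estimate of Theorem \ref{lemmacumulantsrg}. Following the setup of Example \ref{stexample} as specialized in the paragraph preceding the theorem, write $W-\E W = \sum_{\alpha\in A_n} X_\alpha$ with $X_\alpha := I_\alpha - \E I_\alpha$; since $|X_\alpha|\leq 1$, one has $A=1$ in the notation of Theorem \ref{lemmacumulantsrg}. Take $L_n$ to be the dependency graph joining $\alpha,\beta$ whenever $G_\alpha,G_\beta$ share an edge of $K_n$. The index set has size $|A_n| = \binom{n}{l}l!/a = O(n^l)$, and the maximal degree of $L_n$ satisfies $M+1 = O(k^2 n^{l-2})$: each of the $k$ edges of $G_\alpha$ is contained in at most $O(k\,n^{l-2})$ copies of $G$, obtained by identifying the edge with one of the oriented edges of $G$ and choosing the remaining $l-2$ vertices from $K_n$.

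The second ingredient is the variance lower bound $\sigma^2 := \V W \geq c\,n^{2l-2}p^{2k-1}(1-p)$. Since the indicators $I_\alpha$ are monotone in the edge variables, the FKG inequality yields $\operatorname{Cov}(X_\alpha,X_\beta)\geq 0$ for all $\alpha,\beta$ sharing an edge. Restricting the double sum defining $\V W$ to those pairs $(G_\alpha,G_\beta)$ whose intersection is a single edge, so that their union has $2l-2$ vertices and $2k-1$ edges, yields $\Theta(n^{2l-2})$ contributions, each of order $p^{2k-1}(1-p)$, and the claim follows.

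Inserting $A=1$ and the bounds on $|A_n|$ and $M+1$ into Theorem \ref{lemmacumulantsrg}, and writing $B := 2e(M+1)/\sigma$ and $C := 4e^2 |A_n|(M+1)/\sigma^2$, one obtains $|\Gamma_j(Z_n)| \leq (j!)^3\,C\,B^{j-2}$ for every $j\geq 2$. Choosing
\[
\Delta_n := \frac{1}{BC} = \frac{\sigma^3}{8 e^3\, |A_n|\,(M+1)^2},
\]
one checks that $(B\Delta_n)^{j-2}\,C = C^{-(j-3)} \leq 1$ for every $j\geq 3$ (note that $C$ is of order $1/(p^{2k-1}(1-p))$, hence $C\geq 1$ for large $n$), giving $|\Gamma_j(Z_n)|\leq (j!)^3/\Delta_n^{j-2}$ and matching \eqref{eqcumulants} with $\gamma = 2$. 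Substituting the estimates of the previous two paragraphs produces
\[
\Delta_n \geq c'\, n\,\bigl(p^{k-1}\sqrt{p(1-p)}\bigr)^3,
\]
and careful tracking of the combinatorial constants recovers the factor $1/(8k^2 e^3)$ appearing in the statement.

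Since $1/(1+2\gamma) = 1/5$, the hypothesis $a_n \ll \bigl(n(p^{k-1}\sqrt{p(1-p)})^3/(8k^2e^3)\bigr)^{1/5}$ is exactly $a_n/\Delta_n^{1/5}\to 0$. Moreover, condition \eqref{eqprobbedingung} is equivalent to $\Delta_n\to\infty$, since $\Delta_n^2$ is proportional to $n^2 p^{3(2k-1)}(1-p)^3$. Theorem \ref{thmcumulants} then yields the moderate deviation principle for $\bigl(\tfrac{1}{a_n}Z_n\bigr)_n$ with speed $a_n^2$ and rate $x^2/2$, as required. The two delicate steps will be (a) establishing the variance lower bound uniformly across the range of $p$ allowed by \eqref{eqprobbedingung}, which requires controlling potential cancellations among the higher-order covariance terms, and (b) the bookkeeping of the combinatorial constants needed to produce the precise factor $8k^2 e^3$ rather than an unspecified constant.
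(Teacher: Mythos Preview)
Your proof is correct and follows the paper's approach: bound $M+1=O(n^{l-2})$ and $\sigma^2\gtrsim n^{2l-2}p^{2k-1}(1-p)$, feed these into Theorem~\ref{lemmacumulantsrg}, massage the result into the form $(j!)^3/\Delta_n^{j-2}$ with $\gamma=2$, and invoke Theorem~\ref{thmcumulants}. The only differences are cosmetic: the paper cites Ruci\'nski for the variance bound rather than arguing via FKG (so your worry (a) is unfounded---FKG makes every covariance nonnegative, hence there are no cancellations to control), and it reaches the $(j-2)$-th power directly using $j\le 3(j-2)$ for $j\ge 3$ instead of your $B,C$ repackaging; for the latter, note that $C\ge 4e^2>1$ follows immediately from the crude bound $\sigma^2\le |A_n|(M+1)$, so you need not appeal to the order of $C$.
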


\begin{remark}
Condition \eqref{eqprobbedingung} on $p(n)$ assures that $(a_n)_n$ grows to infinity.
Moderate deviations for the subgraph count statistic of Erd{\H o}s-R{\'e}nyi
random graphs are already considered in \cite{DoeringEichelsbacher:2009}
studying the log-Laplace transform via martingale differences and using the
G\"artner-Ellis Theorem. The stated moderate deviation principle in Theorem \ref{thmMDPtriangle} is on one
hand valid for more probabilities $p(n)$ than in \cite[Theorem 1.1]{DoeringEichelsbacher:2009}.
But on the other hand the scaling $\beta_n :=a_n \sqrt{\V W}$ has a smaller range in comparison to \cite{DoeringEichelsbacher:2009}:
Using $\text{const.} \, n^{2l-2} p^{2k-1} (1-p)
\leq \V W \leq \text{const.} \, n^{2l-2} p^{2k-1} (1-p)$
\label{Rucinskivar}
(see \cite[2nd section, page 5]{Rucinski:1988}) 
the scaling in Theorem \ref{thmMDPtriangle} is equal to
$$
n^{l-1} p^{k-1} \sqrt{ p(1-p) } \ll
a_n \sqrt{\V W}
\ll n^{l-\frac{4}{5}} \bigl(p^{k-1} \sqrt{ p(1-p) }\bigr)^{8/5}\,.
$$
Where the scaling in Theorem \cite[Theorem 1.1]{DoeringEichelsbacher:2009} is bounded by:
$$
n^{l-1} p^{k-1} \sqrt{ p(1-p) } \ll \beta_n \ll n^{l} \left( p^{k-1} \sqrt{p(1-p)} \right)^4\,.
$$
\end{remark}

\begin{proof}[Proof of Theorem~\ref{thmMDPtriangle}]
In order to prove Theorem \ref{thmMDPtriangle} we apply
Theorem \ref{lemmacumulantsrg} to show that the conditions
of Theorem \ref{thmcumulants} are satisfied.
Let us consider the subgraph count statistic in an
Erd{\H o}s-R{\'e}nyi random graph for any fixed subgraph $G$ with $l$ vertices
and $k$ edges and its associated dependency graph $L_n$ defined as above.
Let $M_n$ be the maximal degree of the dependency graph $L_n$. Thus to
determine $M_n$ we need to bound the maximal number of subgraphs isomorphic
to $G$ having at least one edge in common with a fixed subgraph $G'$ which
is itself isomorphic to $G$. For every subgraph $G'$, isomorphic to $G$, we
have to consider one of the $k$ edges of $G'$ to be the common edge.
Accordingly we can choose $l-2$ further vertices out of $n-2$ possible vertices
-- which justifies a factor $(n-2)_{l-2}:=(n-2) (n-1) \cdots (n-l-1)$.
We can substract one solution, because we do not count $G'$ itself and achieve
$$
M_n \leq k (n-2)_{l-2} -1 \leq k n^{l-2} -1\,.
$$
The number $N_n$ of the subgraphs in $K_n$ which are isomorphic to $G$ satisfies the inequality
$$
\frac{n (n-1)\cdots (n-l-1)}{l (l-1)\cdots 1}=\left({n}\atop{l}\right)
\leq N_n \leq n_l=n (n-1)\cdots (n-l-1)\,.
$$
As stated on page \pageref{Rucinskivar} the variance $\sigma_n^2=\V W$ of
$\sum_{\alpha=1}^{N_n} Y_{\alpha}= W-\E W$ is bounded by a constant times
$n^{2l-2} p^{2k-1} (1-p)$.
For the cumulants of $\frac{W-\E W}{\sqrt{\V W}}$ it follows with \eqref{eqcumulantRG} that, for $j\geq 3$,
\begin{eqnarray}
\bigl|\Gamma_j\bigr|
&\leq&
(j!)^3 n^l \bigl(k n^{l-2}\bigr)^{j-1} (2e)^j \frac{1}{\bigl(\text{const.} n^{l-1} p^{k-1} \sqrt{p (1-p)}\bigr)^j}\nonumber\\
&=&
(j!)^3 \frac{1}{n^{j-2}} k^{j-1} (2e)^j \frac{1}{\bigl(\text{const.} p^{k-1} \sqrt{p (1-p)}\bigr)^j}\nonumber\\
&\leq&
(j!)^3 \left(\frac{8 k^2 e^3}{n \bigl(\text{const.} p^{k-1} \sqrt{p (1-p)}\bigr)^3}\right)^{j-2}\,.
\label{subgraphcumulant}
\end{eqnarray}
In the last inequality we used the fact that $3(j-2)\geq j$ is equivalent to
$j\geq 3$.
This implies
that condition \eqref{eqcumulants} is satisfied for $\gamma=2$ and
$$\Delta_n
=\frac{n \bigl(\text{const.} p^{k-1} \sqrt{p (1-p)}\bigr)^3}{8 k^2 e^3}\,.$$
$\Delta_n$ increases if $n^2 p^{3(2k-1)} (1-p)^3\stackrel{n\to \infty}{\longrightarrow} \infty$.
So in the following we can only consider this case.
Now we choose a sequence $(a_n)_n$ such that
$$
1 \ll a_n \ll \Delta_n^{1/(1+2\gamma)}
= \left(\frac{n \bigl(p^{k-1} \sqrt{p (1-p)}\bigr)^3}{8 k^2 e^3}\right)^{1/5}\,,
$$
and apply Theorem \ref{thmcumulants}, which ends the proof of Theorem \ref{thmMDPtriangle}.
\end{proof}

\begin{remark}\label{CLTcumulants}
As mentioned in the introduction, the cumulant bounds \eqref{eqcumulants}
imply a Central Limit Theorem if $\lim_{n\to\infty}\Delta_n=\infty$.
Moreover applying \cite[Lemma 2.1]{SaulisStratulyavichus:1989} and inequality
\eqref{subgraphcumulant} proves the following bound for the Kolmogorov distance:
\begin{equation*}
\sup_{x\in\mathbb R} \Bigl| P\Bigl(\frac{W-\E W}{\sqrt{\V W}}\leq x\Bigr)-\Phi(x)\Bigr|
\leq 108 \left(\frac{\sqrt{2}}{6} \Delta_n \right)^{-\frac{1}{1+2\gamma}}
\leq \frac{\text{const.}}{n^{1/5}  \bigl(p^{k-1} \sqrt{p (1-p)}\bigr)^{3/5}}\,.
\end{equation*}
This bound is weaker than the inequality induced in \cite{BarbourKaronskiRucinski:1989} via Stein's method.
For some improvements see \cite{Goldstein}.
\end{remark}

\subsection{Another example of a dependency graph}

Let $X_i$, $i \geq 1$, be independent centered random variables
with existing variances $\V X_i \geq \varepsilon$ for any $\varepsilon>0$ and define $Z_n:=\sum_{i=1}^n X_i X_{i+1}$.
Let $A$ be a constant such that $|X_i|\leq \sqrt{A}$ almost surely. Let $(a_n)_n$
be a divergent sequence where $a_n\ll n^{1/10}$. Then $\Bigl(\frac{1}{a_n \sqrt{\V Z_n}} Z_n\Bigr)_{n\in\mathbb N}$
satisfies the moderate deviation principle with speed $a_n^2$ and rate function $I(x)=x^2/2$.

\begin{proof}
Set $Y_i:= X_i X_{i+1}$ for all $i=1,\dots,n$. $Y_i$ is independent of $Y_j$ for all
$j$ not equal to $i-1$ and $i+1$. Let $L_n$ be the graph with vertex set
$\{1,\dots,n\}$ and edges between $1$ and $2$, $2$ and $3$, \dots as well as between
$n-1$ and $n$. $L_n$ is a dependency graph of $\{Y_i\}_{i=1}^{N}$ with $N=n$ and $M=2$.
The variance $\sigma_n^2=\V Z_n$ is bigger or equal than a constant times $n$:
\begin{eqnarray*}
\V Z_n &=& \sum_{i,j =1}^n \E \bigl[Y_i Y_j\bigr] = \sum_{i,j =1}^n \E[X_i X_{i+1} X_j X_{j+1}]\\
&=& \sum_{i=1}^n \E\bigl[X_i^2 X_{i+1}^2\bigr]
+ \sum_{i=2}^n \E\bigl[X_i X_{i+1} X_{i-1} X_i\bigr]
+ \sum_{i=1}^{n-1} \E\bigl[X_i X_{i+1} X_{i+1} X_{i+2}\bigr]\\
&=& \sum_{i=1}^n \sqrt{\V(X_i) \V(X_{i+1})} + 2 \sum_{i=2}^n \E[X_{i-1}]  \E[X_i^2] \E[X_{i+1}]\\
&=& \sum_{i=1}^n \sqrt{\V(X_i) \V(X_{i+1})}
\geq n \min_{i=1,\dots,n+1} \V(X_i)
\end{eqnarray*}
due to the independence of $X_1,\dots,X_{n+1}$ and the fact that their expectations
are equal to zero. In particular, for independent and identically distributed random
variables $X_i$, we have $\V Z_n= \text{const.} \cdot n$.
Using Theorem \ref{lemmacumulantsrg} we have a bound for the cumulant
of $\frac{1}{\sqrt{\V Z_n}} \sum_{i=1}^n Y_i =\frac{1}{\sqrt{\V Z_n}} Z_n$:
$$
\bigl|\Gamma_j\bigr| \leq (j!)^3 \left( \frac{A^3}{\text{const.} \sqrt{n}}\right)^{j-2}\,.
$$
Now we can apply Theorem \ref{thmcumulants} with $\gamma=2$, $\Delta_n= \text{const.} \sqrt{n}$
and a sequence $(a_n)_n$ satisfying
$\frac{a_n}{\sqrt{n}^{1/5}}\stackrel{n\to\infty}{\longrightarrow}0$.
This proves the claim.
\end{proof}

\section{Application to non-degenerate $U$-statistics}
\label{Application to non-degenerate U-statistics}
Let $X_1,\dots,X_n$ be independent and identically distributed random
variables with values in a measurable space $\mathcal X$. For a
measurable and symmetric function $h:{\mathcal X}^m\to \R$ we define
$$
U_n(h):= \frac{1}{\left(n\atop m\right)}
\sum_{1\leq i_1<\dots<i_m \leq n} h(X_{i_1},\dots,X_{i_m})\:,
$$
where symmetric means invariant under all permutations of its arguments.
$U_n(h)$ is called a {\it U-statistic} with {\it kernel} $h$ and
{\it degree} $m$.
Define the conditional expectation for $c=1,\dots,m$ by
\begin{eqnarray*}
h_c(x_1,\dots,x_c)
&:=&\E\bigl[ h(x_1,\dots,x_c,X_{c+1},\dots, X_m)\bigr]
\\
&=&\E\bigl[ h(X_1,\dots, X_m)\big| X_1=x_1,\dots, X_c=x_c\bigr]
\end{eqnarray*}
and the variances by $\sigma_c^2:=\V\bigl[h_c(X_1,\dots,X_c)\bigr]$.
A U-statistic is called {\it degenerate of order $d$} if and only if $0=\sigma_1^2 = \cdots = \sigma_d^2 < \sigma_{d+1}^2$ and
{\it non-degenerate} if $\sigma_1^2>0$. As is well known, the weak limits of appropriately scaled $U$-statistics depend
on the order of degeneracy. By the Hoeffding-decomposition (see for example \cite{Lee:1990}), we know
that for every symmetric function $h$, the $U$-statistic can be decomposed into a sum
of degenerate $U$-statistics of different orders. 
In the degenerate case the linear term of this decomposition
disappears. On the level of moderate deviations, in \cite{EichelsbacherSchmock:2003} the MDP for non-degenerate $U$-statistics
is investigated; the proof used the fact that the linear term 
in the Hoeffding-decomposition is leading in the non-degenerate case. 
Moreover in \cite{EichelsbacherSchmock:2003},
moderate deviation principles for Banach-space valued degenerate $U$-statistics were established, with bon-convex rate functions. 

In the present paper the observed U-statistics are assumed to be non-degenerate. The main result is:
 
\begin{theorem}{\it (Moderate deviations for non-degenerate $U$-statistics)}
\label{thm2.14}\newline
Let $X_1, X_2, \dots$ be a sequence of independent and identically distributed random variables and
$$
U_n(h)= \frac{1}{\left(n\atop 2\right)} \sum_{1\leq i_1 < i_2 \leq n} h(X_{i_1}, X_{i_2})
$$
a non-degenerate $U$-statistic of degree two. Let $\sigma_1^2:= \V \left( \E[h(X_1,X_2)| X_1] \right) <\infty$ and suppose that there exist constants $\gamma\geq1$ and $C>0$ such that
\begin{equation}\label{UStatKum}
\E\bigl[|h(X_1,X_2)|^j\bigr]\leq C^j (j!)^{\gamma}
\end{equation}
for all $j\geq 3$.
Defining
$$
C(\sigma_1) :=
\left\{\begin{array}{ll}
\frac{C}{\sigma_1}&\text{, if } C\leq \sigma_1\\
\frac{C^3}{\sigma_1^3}&\text{, if } C> \sigma_1\\
\end{array}
\right.
$$
and $\Delta_n:=\left(\frac{\sqrt{n}}{2 \sqrt{2} e C(\sigma_1)}\right)$,
let $(a_n)_n$ be a sequence growing to infinity such that
\begin{equation}\label{eqna_nDelta}
\frac{a_n}{\Delta_n^{1/(1+2\gamma)}} \stackrel{n\to\infty}{\longrightarrow} 0\, .
\end{equation}
Then $\Bigl(\frac{U_n}{a_n \sqrt{\V(U_n)}}\Bigr)_{n\in\mathbb N}$ satisfies the moderate deviation principle with speed $a_n^2$ and rate function
$I(x)=\frac{x^2}{2}$.
\end{theorem}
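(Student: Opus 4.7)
The plan is to use the Hoeffding decomposition to split $U_n$ into a leading linear i.i.d.\ sum and a completely degenerate quadratic remainder, then apply Theorem \ref{thmnotidentical} to the linear part and prove the remainder is exponentially negligible at moderate-deviation scale. Set $\theta := \E h(X_1, X_2)$, $h_1(x) := \E[h(x, X_2)] - \theta$ (so $\V h_1(X_1) = \sigma_1^2 > 0$ by non-degeneracy), and $h_2(x_1, x_2) := h(x_1, x_2) - h_1(x_1) - h_1(x_2) - \theta$, which is completely degenerate in the sense $\E h_2(x, X_2) = 0$ for every $x$. Then
$$
U_n - \theta \;=\; \frac{2}{n}\sum_{i=1}^{n} h_1(X_i) \;+\; \frac{1}{\binom{n}{2}}\sum_{i<j} h_2(X_i, X_j) \;=:\; L_n + D_n,
$$
with $\V L_n = 4\sigma_1^2/n$ and $\V D_n = O(n^{-2})$, so that $\V U_n = \V L_n \,(1+o(1))$.

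For the linear part, conditional Jensen combined with \eqref{UStatKum} gives $\E|h_1(X_1)|^j \le \E|h(X_1,X_2) - \theta|^j \le 2^j C^j (j!)^\gamma$ for $j \ge 3$, with $|\theta| \le (\E h^2)^{1/2}$ controlled via \eqref{UStatKum} and H\"older. Rewriting this in the Bernstein form of \eqref{momentenbedingungen}, i.e.\ as $(j!)^{1+(\gamma-1)} K^{j-2} \sigma_1^2$, one is forced to take $K = 2C$ when $C \le \sigma_1$, whereas the worst case $j=3$ requires $K$ of order $C^3/\sigma_1^2$ when $C > \sigma_1$ --- precisely the two branches of $C(\sigma_1)$. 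Theorem \ref{thmnotidentical} then yields the MDP for $L_n/(a_n\sqrt{\V L_n})$ with speed $a_n^2$ and rate $x^2/2$ over the full range of $a_n$ compatible with the stated $\Delta_n$; the precise prefactor $2\sqrt{2}\,e$ arises from the sharp moment-to-cumulant passage of \cite[Lemma 3.1]{SaulisStratulyavichus:1989} combined with Stirling's approximation.

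It remains to show that $D_n/(a_n\sqrt{\V U_n})$ is exponentially negligible at speed $a_n^2$. Complete degeneracy of $h_2$ forces that in the Leonov--Shiryaev expansion of $\Gamma_j(D_n)$ only those configurations of $j$ index pairs whose induced multigraph on $\{1,\dots,n\}$ is connected \emph{and} has every vertex covered at least twice can contribute; all tree-like patterns vanish. Compared with a naive application of Theorem \ref{lemmacumulantsrg} to the Johnson-type dependency graph on index pairs, this structural constraint gains an additional factor $n^{-1/2}$ per cumulant, producing a bound of the form $|\Gamma_j(D_n/\sqrt{\V U_n})| \le n^{-1/2}(j!)^{1+\gamma'} \Delta_n^{-(j-2)}$ for some $\gamma' \ge 0$. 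Inserting this into the large-deviation inequality \cite[Lemma 2.3]{SaulisStratulyavichus:1989} yields $\limsup_{n\to\infty} a_n^{-2}\log \P(|D_n|/\sqrt{\V U_n} \ge \varepsilon a_n) = -\infty$ for every $\varepsilon>0$, and exponential equivalence then transports the MDP from $L_n/(a_n\sqrt{\V U_n})$ to $U_n/(a_n\sqrt{\V U_n})$. The main technical obstacle lies precisely here: the naive dependency-graph cumulant bound loses too many powers of $n$, and the improvement provided by the complete degeneracy of $h_2$ has to be tracked carefully through the combinatorics of the moment-to-cumulant expansion.
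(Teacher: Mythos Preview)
Your route is genuinely different from the paper's. The paper never decomposes $U_n$; it quotes a ready-made cumulant bound for the \emph{whole} $U$-statistic due to Ale\v{s}kevi\v{c}ien\.{e} (recorded as \cite[Lemma~5.3]{SaulisStratulyavichus:1989}),
\[
|\Gamma_j(U_n)| \;<\; 2\,e^{2(j-2)}\,\frac{2^j-1}{j}\,C^j (j!)^{1+\gamma}\,n^{-(j-1)}\qquad (3\le j\le n-1),
\]
combines it with the elementary lower bound $\sqrt{\V U_n}\ge e\,\sigma_1/\sqrt{2n}$ for large $n$, and obtains $|\Gamma_j(U_n/\sqrt{\V U_n})|\le (j!)^{1+\gamma}\bigl(2\sqrt{2}\,e\,C(\sigma_1)/\sqrt{n}\bigr)^{j-2}$ directly. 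One invocation of Theorem~\ref{thmcumulants} then finishes the argument in a few lines. In particular, the constant $2\sqrt{2}\,e$ and the two-branch definition of $C(\sigma_1)$ are artefacts of this specific computation (the Ale\v{s}kevi\v{c}ien\.{e} constants together with the variance lower bound), not of any Bernstein analysis of $h_1$ via Theorem~\ref{thmnotidentical}; your derivation of $C(\sigma_1)$ is therefore post-hoc and would not reproduce exactly this $\Delta_n$.

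The Hoeffding-decomposition strategy you propose is the classical one (it is essentially what \cite{EichelsbacherSchmock:2003} does), and your treatment of the linear part is fine up to constants. The gap is in the degenerate remainder. The cumulant bound you assert for $D_n/\sqrt{\V U_n}$, namely $n^{-1/2}(j!)^{1+\gamma'}\Delta_n^{-(j-2)}$, is not established: applying Theorem~\ref{lemmacumulantsrg} to the Johnson graph on pairs gives a bound that \emph{grows} with $n$ (the factor $N(M+1)^{j-1}/\sigma^j$ there is of order $n^2\cdot n^{j-1}/n^j=n$), so a single heuristic factor $n^{-1/2}$ from degeneracy is not enough to salvage it. Moreover, \cite[Lemma~2.3]{SaulisStratulyavichus:1989} is stated for variance-one variables and controls tails only inside its zone of normal convergence; after standardising $D_n$ you need a deviation of order $a_n\sqrt{n}$, and it is not clear your claimed $\Delta$ reaches that far. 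What actually works here is a Bernstein-type exponential inequality for completely degenerate $U$-statistics, which under \eqref{UStatKum} yields $\P(n|D_n|>t)\le C\exp(-ct)$ and hence $\exp(-c\,a_n\sqrt{n})=o(\exp(-M a_n^2))$ since $a_n\ll n^{1/6}\ll\sqrt{n}$. That, however, is a separate piece of technology (cf.\ the discussion in Remark~\ref{Remarkustat}), and bypassing it is precisely what the paper's direct cumulant approach buys.
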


\begin{remark}\label{Remarkustat}
Let us discuss the conditions \eqref{UStatKum} and \eqref{eqna_nDelta} in detail.

\noindent
{\bf a.}
In \cite{EichelsbacherSchmock:2003} a moderate deviation
principle for degenerate and for non-degenerate U-statistics with a kernel
function $h$, which is bounded or satisfies exponential moment conditions,
was considered (see also \cite{Eichelsbacher:2001}).
In \cite{EichelsbacherSchmock:2003} the exponential moment conditions for a non-degenerate 
$U$-statistic of degree two reads as follows: the function $h_1$ of the leading term in the Hoeffding-decomposition
has to satisfy the weak Cram\'er condition: $\int \exp(\alpha \|h_1\|) dP < \infty$
for a $\alpha >0$. Moreover $h_2$ has to satisfy the condition that there exists at least one $\alpha_h >0$ such that
$\int \exp(\alpha_h \|h_2\|^2) dP^2 < \infty$. The MDP in  \cite{EichelsbacherSchmock:2003}
was proved for $1 \ll a_n \ll \sqrt{n}$. Since the leading term of the Hoeffding decomposition is a partial sum of
i.i.d. random variables, the weak Cram\'er condition on $h_1$ can be relaxed. A necessary and sufficient condition
is given in \cite{EichelsbacherLoewe:2003} which is
$$
\limsup_{n \to \infty} \frac{1}{a_n^2} \log \bigl( n P \bigl( |h_1(X_1)| > \sqrt{n} a_n \bigr) \bigr) = - \infty.
$$
The strong condition on $h_2$ is due to the fact, that a Bernstein-type inequality for the degenerate
part of the Hoeffding-decomposition was applied, see \cite[Theorem 3.26]{EichelsbacherSchmock:2003}.
Unfortunately is is not obvious how to compare condition \eqref{UStatKum} with the conditions in \cite{EichelsbacherSchmock:2003}.
Condition \eqref{UStatKum} is a Bernstein-type condition on the moments of $h$, which is equivalent to a weak Cram\'er condition
on $h$. We haven't no assumptions on $h_2$, hence \eqref{UStatKum} seems to be weaker. On the other side, even in the
case of the best bounds ($\gamma =1$) in \eqref{UStatKum}, our result is restricted to $1 \ll a_n \ll n^{1/6}$. The prize
of less restrictive conditions on $h$ seem to be that the moderate deviation principle holds in a smaller scaling-interval.
Our Theorem is an improvement of \cite{EichelsbacherSchmock:2003} for some $a_n$.

\noindent
{\bf b.}
We can also compare the result in Theorem \ref{thm2.14} with the result in
\cite[Theorem 3.1]{DoeringEichelsbacher:2009}, which was deduced via the Laplace transform. Let the kernel function $h$ be bounded.
Obviously condition \eqref{UStatKum} is fulfilled with  $\gamma=1$ and according to Theorem \ref{thm2.14} 
the object $\Bigl(\frac{U_n}{a_n \sqrt{\V(U_n)}}\Bigr)_n$ satisfies the MDP with speed $a_n^2$ and rate function
$I(x)=\frac{x^2}{2}$ for every sequence $(a_n)_n$ growing to infinity slow enough such that $1 \ll a_n\ll n^{1/6}$.

Let $(b_n)_n$ be a sequence satisfying $\sqrt{n}\ll b_n \ll n$. From \cite[Theorem 3.1]{DoeringEichelsbacher:2009} it follows that 
$\bigl(\frac{n}{b_n}U_n\bigr)_n$ satisfies the MDP with speed $\frac{b_n^2}{n}$ and rate function
$I(x)=\frac{x^2}{8\sigma_1^2}$. Choosing $b_n=n a_n \sqrt{\V U_n}$ in \cite[Theorem 3.1]{DoeringEichelsbacher:2009} requires
the scaling $\sqrt{n}\ll b_n =n a_n \sqrt{\V U_n} \ll n \,$.
Applying that $n\V U_n = 4\sigma_1^2 + O\bigl(\frac{1}{n}\bigr)$ gives
$1\ll a_n \ll \sqrt{n}$. From \cite[Theorem 3.1]{DoeringEichelsbacher:2009} we obtain, that
$\frac{n}{b_n}U_n=\frac{U_n}{a_n \sqrt{\V(U_n)}}$ satisfies the MDP with speed
$n a_n^2 \V U_n= a_n^2 4 \sigma_1^2 + O\bigl(\frac{a_n^2}{n}\bigr)$ 
and rate function $I(x)=\frac{x^2}{8\sigma_1^2}$. This is the same result
as stated above via Theorem \ref{thm2.14}. Therefore the MDP via the log-Laplace transform holds for a larger scaling range.
But \cite[Theorem 3.1]{DoeringEichelsbacher:2009} assumed bounded $U$-statistics, and thus Theorem \ref{thm2.14} is valid for more 
general kernel functions $h$ for some $a_n$.
\end{remark}

\begin{proof}
According to \cite{Alesk:1990}, see \cite[Lemma 5.3]{SaulisStratulyavichus:1989},
the cumulant of $U_n$ can be bounded by
$$
|\Gamma_j(U_n)| < 2 e^{2(j-2)}\frac{2^j-1}{j} C^j (j!)^{1+\gamma} \frac{1}{n^{j-1}}
$$
for all $j=1,2,\dots,n-1$ and $n\geq 7$. The quite involved proof is presented in \cite{SaulisStratulyavichus:1989}.
The variance for the non-degenerate $U$-statistic is given by
$\V (U_n)= \frac{4 \sigma_1^2}{n} \frac{n-2}{n-1} + \frac{2 \sigma_2^2}{n(n-1)}$,
see Theorem 3 in \cite[chapter 1.3]{Lee:1990}. Therefore it exists an $n_0\geq 7$
big enough such that $\sqrt{\V (U_n)}\geq \frac{e \sigma_1}{\sqrt{2n}}$.
The following bound holds for the cumulants of $\frac{U_n}{\sqrt{\V(U_n)}}$:
$$
|\Gamma_j|\leq
(j!)^{1+\gamma} \left(\frac{2 \sqrt{2} e C(\sigma_1)}{\sqrt{n}}\right)^{j-2}
$$
for all $j= 3,\dots, n-1$ and $n\geq n_0$.
Applying Theorem \ref{thmcumulants}, \eqref{eqna_nDelta}, with   $\Delta_n=\left(\frac{\sqrt{n}}{2 \sqrt{2} e C(\sigma_1)}\right)$ 
is a sufficient condition for the moderate deviation principle.
\end{proof}

\begin{remark}
Let us remark, that known precise estimates on cumulants will enable us to prove moderate deviation principles
for further probabilistic objects. Examples are polynomial forms, Pitman polynomial estimators and multiple stochastic integrals (see \cite{SaulisStratulyavichus:1989}).
This will be not the topic of this paper.
\end{remark}

\bigskip

\section{Moderate deviations for the characteristic polynomials in the circular ensembles}

In the last decade, a huge number of results in random matrix theory were proved. Some of the results were extrapolated
to make interesting conjectures on the behaviour of the Riemann zeta function on the critical line. It is known that random matrix
statistics describe the local statistics of the imaginary parts of the zeros high up on the critical line. The random matrix
statistic considered for this conjectural understanding of the zeta-function  is the characteristic polynomial $Z(\theta) := Z(U,\theta) =
\det\bigl(I-U e^{-i \theta}\bigr)$ of a unitary $n \times n$ matrix $U$. The matrix $U$ is considered as a random variable in the 
{\it circular unitary ensemble} (CUE), that is, the unitary group $U(n)$ equipped with the unique translation-invariant (Haar) probability measure.
In \cite{KeatingSnaith:2000} exact expressions for any matrix size $n$ are derived for the moments of $|Z|$ and from these the asymptotics of the value 
distribution and cumulants of the real and imaginary parts of $\log Z$ as $n \to \infty$ are obtained. In the limit, these distributions are independent and Gaussian.
In \cite{KeatingSnaith:2000} the results were generalized to the circular orthogonal (COE) and the circular symplectic (CSE) ensembles. The goal of this section is to
prove a moderate deviation principle for the appropriately rescaled $\log Z$ for the three classical circular ensembles applying Theorem \ref{thmcumulants}.
Remark that our result is known for CUE, see \cite[Theorem 3.5]{HughesKeatingOConnell:2001}, see Remark \ref{remarkchar}.
We present a different proof and generalize the result to the COE and CSE ensembles. We start with the representation of $Z(U, \theta)$ in terms
of the eigenvalues $e^{i \theta_k}$ of $U$:
$$
Z(U,\theta)= \det\bigl(I-U e^{-i \theta}\bigr) = \prod_{k=1}^n \bigl(1-e^{i(\theta_k-\theta)}\bigr).
$$
Let $Z$ now represent the characteristic polynomial of an $n \times n$ matrix $U$ in either the CUE ($\beta=2$), the COE ($\beta=1$), or the CSE ($\beta=4$).
The $C \beta E$ average can then be performed using the joint probability density for the eigenphases $\theta_k$
$$
\frac{(\beta/2)!^n}{(n \beta/2)! (2 \pi)^n} \prod_{1 \leq j < m \leq n} |e^{i \theta_j} - e^{i \theta_m}|^{\beta}.
$$
Hence the $s$-moment of $|Z|$ is of the form
$$
\langle |Z|^s \rangle_{\beta} = \frac{(\beta/2)!^n}{(n \beta/2)! (2 \pi)^n} \int_0^{2 \pi} \cdots \int_0^{2 \pi} d\theta_1 \cdots d\theta_n 
\prod_{1 \leq j < m \leq n} |e^{i \theta_j} - e^{i \theta_m}|^{\beta} \times \bigg| \prod_{k=1}^n \bigl(1 - e^{i(\theta_k - \theta)} \bigr) \bigg|^s.
$$
This integral can be evaluated using Selberg's formula, see \cite{Mehta:book}, which leads to
$$
\langle |Z|^s \rangle_{\beta} = \prod_{j=0}^n \frac{\Gamma(1 + j \beta /2) \Gamma(1 + s + j \beta /2)}{(\Gamma( 1 + s/2 + j \beta /2))^2}
$$
denoting the gamma function by $\Gamma$ (without an index).
Hence $\log \langle |Z|^s \rangle_{\beta}$ has an easy form and equals at the same time by definition $\sum_{j \geq 1} \frac{\Gamma_j(\beta)}{j!} s^j$,
where $\Gamma_j(\beta)= \Gamma_j(\Re \log Z)$ denotes the $j$-th cumulant of the distribution of the real part of $\log Z$ under $C \beta E$. Differentiating $\log \langle |Z|^s \rangle_{\beta}$
one obtains
\begin{equation} \label{vorbereitung}
\Gamma_j (\beta) = \frac{2^{j-1} -1}{2^{j-1}} \sum_{k=0}^{n-1} \psi^{(j-1)}(1 + k \beta /2),
\end{equation}
where 
$$
\psi^{(j)}(z):= \frac{d^{j+1} \log \Gamma(z)}{dz^{j+1}}
= (-1)^{j+1} \int_0^\infty \frac{t^j e^{-zt}}{1-e^{-t}} dt
$$
for $z\in\mathbb C$ with $\Re z>0$ are the polygamma functions, see \cite[6.4.1]{AbramowitzStegun:1964}.
The result of this section is:

\begin{theorem}\label{thmRM}
Let $(a_n)_{n\in\mathbb N}$ be a sequence in $\mathbb R$ such that
$1\ll a_n \ll \sqrt{\log n}$ holds. The sequence of random variables 
$\Bigl(\frac{\Re \log Z}{a_n \sqrt{\log{n}}}\Bigr)_{n\in\mathbb N}$ and
$\Bigl(\frac{\Im \log Z }{a_n \sqrt{\log{n}}}\Bigr)_{n\in\mathbb N}$ under the average over the $C \beta E$ of $n \times n$ matrices
satisfy a moderate deviation principle for $\beta=1,2$ and $4$ with speed $a_n^2$ and rate function
$I(x)=\frac{x^2}{2}$.
\end{theorem}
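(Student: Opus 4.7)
The plan is to apply Theorem \ref{thmcumulants} to a suitable normalization of $\Re\log Z$ (and separately $\Im\log Z$), using the exact cumulant formula \eqref{vorbereitung}. First I would convert \eqref{vorbereitung} into explicit bounds by invoking the series representation $|\psi^{(j-1)}(z)| = (j-1)!\sum_{k\geq 0}(z+k)^{-j}$. For $j=2$ the sum diverges logarithmically and an integral comparison yields $\Gamma_2(\beta) = c_\beta\log n + O(1)$ with an explicit $c_\beta>0$ depending on $\beta$. For $j\geq 3$ the inner series converges and summing over the outer index $k=0,\dots,n-1$ gives a bound independent of $n$, of the form $|\Gamma_j(\beta)|\leq K_\beta^{\,j}(j-1)!$ with $K_\beta$ depending only on $\beta$; this is the crucial input, because it shows that after normalizing by the variance, the cumulants decay like a genuine Bernstein-type sequence.

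Next, set $\tilde Z_n := \Re\log Z/\sqrt{\Gamma_2(\beta)}$, so that $\E\tilde Z_n = 0$ and $\V\tilde Z_n = 1$. Using $\Gamma_j(\tilde Z_n) = \Gamma_j(\beta)/\Gamma_2(\beta)^{j/2}$ together with the bounds from the previous step, one obtains
\[
|\Gamma_j(\tilde Z_n)| \;\leq\; \frac{K_\beta^{\,j}(j-1)!}{(c_\beta\log n)^{j/2}} \;\leq\; \frac{j!}{\Delta_n^{\,j-2}}
\qquad\text{for } j\geq 3,
\]
valid for $n$ sufficiently large, with $\Delta_n = c'_\beta\sqrt{\log n}$. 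This verifies condition \eqref{eqcumulants} with $\gamma = 0$. The hypothesis $a_n \ll \sqrt{\log n}$ guarantees $a_n/\Delta_n\to 0$, so Theorem \ref{thmcumulants} yields the MDP for $\tilde Z_n/a_n$ with speed $a_n^2$ and rate $x^2/2$. Since $\sqrt{\Gamma_2(\beta)/\log n}\to\sqrt{c_\beta}$ deterministically, a linear change of variable transfers this MDP to $\Re\log Z/(a_n\sqrt{\log n})$, the rate function adjusting to $x^2/(2c_\beta)$ (the $\beta$-dependent constant being absorbed into the normalization asserted in Theorem \ref{thmRM}).

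The argument for $\Im\log Z$ is structurally identical, but requires an analogue of the formula \eqref{vorbereitung}. One route is to compute the joint moment $\langle Z^{s}\bar Z^{\,t}\rangle_\beta$ via Selberg's integral with complex exponents, as in \cite{KeatingSnaith:2000}, and extract the cumulants of $\Im\log Z = (2i)^{-1}(\log Z - \log\bar Z)$ by differentiating in $s-t$ at $s=t=0$; alternatively one may invoke the distributional identities between the real and imaginary parts of $\log Z$ known in the $C\beta E$ setting. Once such a polygamma-type expression is in hand, the estimates of the previous paragraphs apply verbatim. I expect the principal technical obstacle to be the uniform handling of all three ensembles $\beta\in\{1,2,4\}$: the polygamma sums have slightly different prefactors in each case, and the cumulant identity for $\Im\log Z$ may need a separate derivation for the COE and CSE, where the simple real/imaginary symmetry available for the CUE does not apply directly. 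The cumulant bounds themselves, and the subsequent application of Theorem \ref{thmcumulants}, are then essentially mechanical.
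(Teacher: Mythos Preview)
Your strategy is the same as the paper's: bound the higher cumulants of $\Re\log Z$ via the polygamma formula \eqref{vorbereitung}, verify \eqref{eqcumulants} with $\gamma=0$ and $\Delta_n$ proportional to $\sqrt{\log n}$, and invoke Theorem~\ref{thmcumulants}. The paper carries out the estimate by writing $\psi^{(j-1)}$ via its integral representation, summing the resulting geometric series, and bounding the double sum $\sum_{r\ge 0}\sum_{s\ge 1}(s+r\beta/2)^{-j}$ case by case for $\beta=1,2,4$ using values of the Riemann zeta function; your proposal to use the series $\psi^{(j-1)}(z)=(-1)^j(j-1)!\sum_{k\ge 0}(z+k)^{-j}$ directly is equivalent and perhaps a shade cleaner.

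The one place you are working harder than necessary is $\Im\log Z$. The paper does not derive a separate polygamma identity or revisit the Selberg integral; it simply cites \cite[eq.~(62)]{KeatingSnaith:2000}, which asserts that for all $j\ge 3$ the $j$-th cumulant of $\Im\log Z$ is bounded by the $j$-th cumulant of $\Re\log Z$, so the bound already obtained transfers verbatim. Your anticipated ``principal technical obstacle'' (a separate treatment of the imaginary part for COE and CSE) therefore does not arise. Finally, your observation that the passage from normalization by $\sigma_{n,\beta}$ to normalization by $\sqrt{\log n}$ introduces a $\beta$-dependent factor in the rate is correct; the paper does not spell out this rescaling step, and the stated rate $x^2/2$ is to be read with the variance asymptotics $\sigma_{n,\beta}^2\sim c_\beta\log n$ from \cite{KeatingSnaith:2000} in mind.
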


\begin{remark} \label{remarkchar}
Theorem~\ref{thmRM} for $\beta=2$ states the same moderate deviation principle as
in \cite[page 440, Theorem 3.5]{HughesKeatingOConnell:2001} for the same scaling
range $1\ll a_n \ll \sqrt{\log n}$ -- but the speed in Theorem~\ref{thmRM} here is
given more explicit:
The speed $b_n$ of moderate deviations in
\cite[Theorem 3.5]{HughesKeatingOConnell:2001} is given by
$b_n= - \frac{a_n^2 \sigma_n^2}{W_{-1}\bigl(-\frac{a_n \sigma_n}{n}\bigr)}$,
where $W_{-1}$ denotes the Lambert's $W$-Function.
The Lambert's $W$-Function solves the equation $W(x) e^{W(x)}=x$ and $W_{-1}$
denotes the real branch with $W_{-1}(x)\leq -1$. For negative $x$
tending to zero we get the following asymptotic behaviour:
$W_{-1}(x) = \log |x| + {\mathcal O}\bigl(\log\bigl| \log|x| \bigr|\bigr) $.
This implies that the limiting speed behaves like
$$
b_n= - \frac{a_n^2 \sigma_n^2}{W_{-1}\bigl(-\frac{a_n \sigma_n}{n}\bigr)}
\sim \frac{a_n^2 \sigma_n^2}{\log n - \log{a_n \sigma_n}}
\sim \frac{a_n^2 \sigma_n^2}{\log n}\sim a_n^2 = s_n\,.
$$
Additionally in \cite[Theorem 3.5]{HughesKeatingOConnell:2001} 
the asymptotic behaviour of $\frac{\Re \log(Z)}{a_n \sqrt{\log{n}}}$ for scaling ranges $a_n=\sqrt{\log n}$
and $\sqrt{\log n} \ll a_n \ll n / \sqrt{\log n}$ is considered.
The circular orthogonal and circular symplectic ensembles were not studied
in \cite{HughesKeatingOConnell:2001}.
\end{remark}

\begin{proof}[Proof of Theorem \ref{thmRM}]
In \cite[eq. (47)]{KeatingSnaith:2000} an integral
representation of the cumulants of $\Re \log(Z)$ for the case $\beta=2$ is derived and
an outline of the extension to $\beta=1$ and $4$ is given. Similarly we prove a bound of the
cumulants satisfying the condition \eqref{eqcumulants} for these three circular
ensembles. With \eqref{vorbereitung} the cumulant can be written as
\begin{eqnarray*}
\Gamma_j\bigl(\Re \log(Z)\bigr)
&=& \frac{2^{j-1}-1}{2^{j-1}} \sum_{k=0}^{n-1} \psi^{(j-1)}\bigl(1+k \frac{\beta}{2}\bigr)
\\
&=& \frac{2^{j-1}-1}{2^{j-1}} \sum_{k=0}^{n-1} (-1)^{j} \int_0^{\infty} \frac{t^{j-1} e^{-(1+k\frac{\beta}{2})t}}{1-e^{-t}} dt
\\
&=& \frac{2^{j-1}-1}{2^{j-1}} (-1)^{j} \int_0^{\infty} t^{j-1} \frac{e^{-t}}{1-e^{-t}} \frac{1-e^{-n \frac{\beta}{2} t}}{1-e^{-\frac{\beta}{2}t}} dt
\\
&=& \frac{2^{j-1}-1}{2^{j-1}} (-1)^{j} \int_0^{\infty} t^{j-1} e^{-t} \bigl(1-e^{-n \frac{\beta}{2} t}\bigr) \sum_{r=0}^{\infty} \sum_{s=0}^{\infty} e^{-(s+r \frac{\beta}{2})t} dt
\end{eqnarray*}
using properties of geometric series for the last two equalities.
Thus we have
$$
\Gamma_j\bigl(\Re \log(Z)\bigr)
= \frac{2^{j-1}-1}{2^{j-1}} (-1)^{j} \sum_{r=0}^{\infty} \sum_{s=1}^{\infty} \int_0^{\infty} t^{j-1} e^{-(s+r \frac{\beta}{2})t} \bigl(1-e^{-n \frac{\beta}{2} t}\bigr) dt.
$$
To get a representation via the gamma function we integrate by substitution
needing a prefactor $(s+r \frac{\beta}{2})^{j-1}$ for $t^{j-1}$ and the
derivative $(s+r \frac{\beta}{2})$ of $(s+r \frac{\beta}{2})t$:
\begin{eqnarray}
\Gamma_j\bigl(\Re \log(Z)\bigr)
&=& \frac{2^{j-1}-1}{2^{j-1}} (-1)^{j} \Gamma(j) \left(
      \sum_{r=0}^{\infty} \sum_{s=1}^{\infty} \frac{1}{(s+r \frac{\beta}{2})^j} - 
      \sum_{r=n}^{\infty} \sum_{s=1}^{\infty} \frac{1}{(s+r \frac{\beta}{2})^j}
\right)
\nonumber\\
&\leq& \frac{2^{j-1}-1}{2^{j-1}} (-1)^{j} \Gamma(j)
      \sum_{r=0}^{\infty} \sum_{s=1}^{\infty} \frac{1}{(s+r \frac{\beta}{2})^j}\,.
\label{cumulantenzwischenstand}
\end{eqnarray}

In the case $\beta=1$ we can estimate the sum as follows:
For $r\in \mathbb N_0$ and $s\in \mathbb N$ the integer $k= 2 (s+\frac{r}{2})=2s+r$ can be
written in $k/2$ number of ways if $k$ is even, in no way if $k=1$, and in $\frac{k+1}{2}$
ways otherwise.
\begin{eqnarray*}
\sum_{r=0}^{\infty} \sum_{s=1}^{\infty} \frac{1}{(s+\frac{r}{2})^j}
&=&  2^j \left( \sum_{k=1}^{\infty} \frac{k}{(2k)^j}
     + \sum_{k=1}^{\infty} \frac{k}{(2k+1)^j}\right)
\\
&\leq& 2^{j-1} \left(
     \sum_{k=1}^{\infty} \frac{2k}{(2k)^j}
     + \sum_{k=1}^{\infty} \frac{2k-1}{(2k-1)^j}
     + \sum_{k=1}^{\infty} \frac{1}{(2k-1)^j}
\right)
\\
&=& 2^{j-1} \left(\zeta(j-2) + \bigl(1-\frac{1}{2^j}\bigr) \zeta(j-1) \right),
\end{eqnarray*}
applying the fact that
$
\sum_{k=1}^{\infty} (2k-1)^{-j}
= \sum_{k=1}^{\infty} k^{-j} - \sum_{k=1}^{\infty} (2k)^{-j} 
= \bigl(1-\frac{1}{2^j}\bigr) \zeta(j-1)
$.
Bounding the zeta function by $\frac{\pi^2}{6}$, this gives
$$
\sum_{r=0}^{\infty} \sum_{s=1}^{\infty} \frac{1}{(s+\frac{r}{2})^j}
\leq  2^{j-1} 2 \frac{\pi^2}{6} =  2^{j-1}  \frac{\pi^2}{3}.
$$

For $\beta=2$ we immediately get:
$$
\sum_{r=0}^{\infty} \sum_{s=1}^{\infty} \frac{1}{(s+r \frac{\beta}{2})^j}
= \sum_{k=1}^{\infty}  \frac{k}{k^j} = \zeta(j-1) \leq \frac{\pi^2}{6}.
$$

The case $\beta=4$ can be considered similarly, see \cite[p.84]{KeatingSnaith:2000}:
Counting the ways in which $k=s+2r$ this yields
$$
\sum_{r=0}^{n-1} \sum_{s=1}^{\infty} \frac{1}{(s+2r)^j}
\leq \frac{1}{2} \left(\zeta(j-1) + \bigl(1-\frac{1}{2^j}\bigr) \zeta(j) \right)
\leq \frac{\pi^2}{6}.
$$
Together with equation \eqref{cumulantenzwischenstand} we can conclude that
\begin{eqnarray*}
\left| \Gamma_j\Bigl(\frac{\Re \log(Z)}{\sigma_{n,\beta}}\Bigr) \right|
&=& \frac{\bigr| \Gamma_j \Re \log(Z) \bigr|}{\sigma_{n,\beta}^j}
\leq \frac{2^{j-1}-1}{2^{j-1}} \Gamma(j)
      \sum_{r=0}^{\infty} \sum_{s=1}^{\infty} \frac{1}{(s+r \frac{\beta}{2})^j}
\frac{1}{\sigma_{n,\beta}^j}
\\
&\leq& \Gamma(j) \frac{1}{\sigma_{n,\beta}^j}
\left\{\begin{array}{ll}
2^{j-1}  \frac{\pi^2}{3} & \text{for }\beta=1\\
\frac{\pi^2}{6} & \text{for }\beta=2,4.
\end{array}
\right.
\end{eqnarray*}
In order to read the parameters $\gamma$ and $\delta$ we apply that the variance of
$Z$ is bounded from below by
$\sigma_{n,\beta}^2 \geq \frac{\log 2}{\beta} \geq \frac{1}{2\beta}$.
Finally we have
\begin{equation}
\left| \Gamma_j\Bigl(\frac{\Re \log(Z)}{\sigma_{n,\beta}}\Bigr) \right|
\leq
(j!) \frac{1}{\sigma_{n,\beta}^{j-2}}
\left\{\begin{array}{ll}
2^j \frac{\pi^2}{3} & \text{for }\beta=1\\
4 \frac{\pi^2}{6} & \text{for }\beta=2\\
8 \frac{\pi^2}{6} & \text{for }\beta=4
\end{array}
\right\}
\leq
(j!) \frac{1}{\sigma_{n,\beta}^{j-2}}
\left\{\begin{array}{ll}
\bigl( \frac{8\pi^2}{3}\bigr)^{j-2} & \text{for }\beta=1\\
\bigl(\frac{2\pi^2}{3}\bigr)^{j-2} & \text{for }\beta=2\\
\bigl(\frac{4\pi^2}{3}\bigr)^{j-2} & \text{for }\beta=4
\end{array}
\right.
\end{equation}
for all $j\geq 3$, hence equation \eqref{eqcumulants} is satisfied for $\gamma=0$ and
$\Delta_n=  \frac{3 \sigma_{n,\beta}}{8\pi^2}$.
Theorem~\ref{thmcumulants} completes the prove for $\Re \log(Z)$.
Since the $j$-th cumulant of the distribution of the imaginary part of $\log Z$ can be bounded by
the $j$-th cumulant of the distribution of the real part of $\log Z$
for all $j\geq 3$, see \cite[eq. (62)]{KeatingSnaith:2000}, the MDP of
$\Im \log(Z)$ follows immediately.
\end{proof}

\begin{remark} 
Dyson observed that the induced eigenvalue distributions of the $C \beta E$ ensembles correspond to the Gibbs distribution
for the classical Coulomb gas on the circle at three different temperatures. Matrix models for general $\beta >0$
for Dysons's circular eigenvalue statistics are provided in \cite{KillipNenciu:2007}, using the theory of orthogonal polynomials
on the unit circle. They obtained a sparse matrix model which is five-diagonal. In this framework, there is no natural
underlying measure such as the Haar measure; the matrix ensembles are characterized by the laws of their elements. 
\end{remark}


\section{Moderate deviations for determinantal point processes}

The collection of eigenvalues of a random matrix can be viewed as a configuration of points 
(on ${\Bbb R}$ or on ${\Bbb C}$), that is a determinantal process. Central Limit Theorems
for occupation numbers were studied in the literature, see \cite{Zeitounibook}
and references therein. This section is devoted to the study of moderate deviation principles
for occupation numbers of determinantal point processes. We will see that it will be an application
of Theorem \ref{thmnotidentical}.

Let $\Lambda$ be a locally compact Polish space, equipped with a positive Radon measure $\mu$ on its
Borel $\sigma$-algebra. Let ${\mathcal M}_+(\Lambda)$ denote the set of positive $\sigma$-finite Radon measures on $\Lambda$.
A point process is a random, integer-valued $\chi \in {\mathcal M}_+(\Lambda)$, and it is simple if $P( \exists x \in \Lambda: \chi(\{x\}) >1)=0$.
A locally integrable function $\varrho : \Lambda^k \to [0, \infty)$ is called a joint intensity (correlation), if for
any mutually disjoint family of subsets $D_1, \ldots, D_k$ of $\Lambda$
$$
\E \bigl( \prod_{i=1}^k \chi(D_i) \bigr) = \int_{\prod_{i=1}^k D_i} \varrho_k(x_1, \ldots, x_k) d\mu(x_1) \cdots d \mu(x_k),
$$
where $\E$ denotes the expectation with respect to the law of the point configurations of $\chi$.
A simple point process $\chi$ is said to be a {\it determinantal point process} with kernel $K$ if its joint intensities $\varrho_k$
exist and are given by
\begin{equation} \label{DPP}
\varrho_k(x_1, \ldots, x_k) = \det_{i,j=1}^k \bigl( K(x_i, x_j) \bigr).
\end{equation}
An integral operator ${\mathcal K}: L^2(\mu) \to L^2(\mu)$ with kernel $K$ given by
$$
{\mathcal K}(f)(x) = \int K(x,y) f(y) \, d \mu(y), \quad f \in L^2(\mu)
$$
is {\it admissible} with admissible kernel $K$ if ${\mathcal K}$ is self-adjoint, nonnegative and locally trace-class
(for details see \cite[4.2.12]{Zeitounibook}). A standard result is, that an integral compact operator
${\mathcal K}$ with admissible kernel $K$ possesses the decomposition
$$
{\mathcal K} f(x) = \sum_{k=1}^n \lambda_k \phi_k(x) \langle \phi_k, f \rangle_{L^2(\mu)},
$$
where the functions $\phi_k$ are orthonormal in $L^2(\mu)$, $n$ is either finite or infinite, and $\lambda_k >0$
for all $k$, leading to
\begin{equation} \label{kernelrep}
K(x,y) = \sum_{k=1}^n \lambda_k \phi_k(x) \phi_k^*(y),
\end{equation}
an equality in $L^2(\mu \times \mu)$.
Moreover, an admissible integral operator ${\mathcal K}$ with kernel $K$ is called {\it good} with good kernel $K$ if the $\lambda_k$ in \eqref{kernelrep}
satisfy $\lambda_k \in (0,1]$. If the kernel $K$ of a determinantal point process is (locally) admissible, then it must in fact be good, see
\cite[4.2.21]{Zeitounibook}.

\begin{example} \label{GUEDDP}
If $(\lambda_1, \ldots, \lambda_n)$ be the eigenvalues of the GUE (Gaussian unitary ensemble) of dimension $n$ and denote
by $\chi_n$ the point process $\chi_n(D) = \sum_{i=1}^n 1_{\{ \lambda_i \in D\}}$. Then $\chi_n$ is a determinantal point process with admissible, good
kernel $K(x,y)= \sum_{k=0}^{n-1} \Psi_k(x) \Psi_k(y)$, where the functions $\Psi_k$ are the oscillator wave-functions, that is
$\Psi_k(x) := \frac{e^{-x^2/4} H_k(x)}{\sqrt{\sqrt{2 \pi} k!}}$, where $H_k(x):= (-1)^k e^{x^2/2} \frac{d^k}{dx^k} e^{-x^2/2}$ is the $k$-th
Hermite polynomial; see \cite[Def. 3.2.1, Ex. 4.2.15]{Zeitounibook}.
\end{example}

We will apply the following representation due to \cite[Theorem 7]{HoKPV06}: Suppose $\chi$ is a determinantal process with good kernel $K$ of the form
\eqref{kernelrep}, with $\sum_k \lambda_k < \infty$. Let $(I_k)_{k=1}^n$ be independent Bernoulli variables with $P(I_k=1) = \lambda_k$. Set
$$
K_I(x,y) = \sum_{k=1}^n I_k \, \phi_k(x) \phi_k^*(y),
$$
and let $\chi_I$ denote the determinantal point process with random kernel $K_I$. Then $\chi$ and $\chi_I$ have the same distribution.
Therefore, let $K$ be a good kernel and for $D \subset \Lambda$ we write $K_D(x,y)= 1_D(x) K(x,y) 1_D(y)$. Let $D$ be such that
$K_D$ is trace-class, with eigenvalues $\lambda_k$, $k \geq 1$. Then $\chi(D)$ has the same distribution as $\sum_k \xi_k$
where $\xi_k$ are independent Bernoulli random variables with $P(\xi_k=1)= \lambda_k$ and $P(\xi_k =0) = 1 - \lambda_k$.
Now we can state the main result of this section:

\begin{theorem} \label{mdpDDP}
Consider a sequence $(\chi_n)_n$ of determinantal point processes on $\Lambda$ with good kernels $K_n$. Let $D_n$
be a sequence of measurable subsets of $\Lambda$ such that $(K_n)_{D_n}$ is trace class. Assume that $(a_n)_n$ is a sequence of real numbers such that
$$
1 \ll a_n \ll \frac{\bigl( \sum_{k=1}^n \lambda_k^n(1- \lambda_k^n) \bigr)^{1/2}}{ \max_{1 \le i \le n} (\lambda_i^n(1-\lambda_i^n))^{1/2}}.
$$
Then $(Z_n)_n$ with
$$
Z_n := \frac{1}{a_n} \frac{\chi_n(D_n) - \E (\chi_n(D_n))}{\sqrt{\V (\chi_n(D_n))}}
$$
satisfies a moderate deviation principle with speed $a_n^2$ and rate function $I(x) = \frac{x^2}{2}$.
\end{theorem}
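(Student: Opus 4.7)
The approach is to reduce to a moderate deviation principle for a sum of independent Bernoullis via the Hough--Krishnapur--Peres--Vir\'ag representation recalled just before the theorem, and then to invoke Theorem~\ref{thmnotidentical}. Since $K_n$ is good and $(K_n)_{D_n}$ is trace-class with eigenvalues $\lambda_k^n\in(0,1]$, that representation identifies $\chi_n(D_n)$ in distribution with $\sum_k\xi_k^n$, where $(\xi_k^n)_k$ is a (finite or countable) family of independent Bernoulli variables with $P(\xi_k^n=1)=\lambda_k^n$. Setting $X_k^n := \xi_k^n - \lambda_k^n$ gives independent centered variables with $\V X_k^n=\sigma_k^2:=\lambda_k^n(1-\lambda_k^n)$ and $\V(\chi_n(D_n))=\sum_k\sigma_k^2$.

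The hypotheses of Theorem~\ref{thmnotidentical} are then immediate: since $|X_k^n|\le 1$ almost surely, one has $|\E(X_k^n)^j|\le\E(X_k^n)^2=\sigma_k^2$ for every $j\ge 2$, so \eqref{momentenbedingungen} holds uniformly in $k$ and $n$ with $\gamma=0$ and $K=1$. Applying Theorem~\ref{thmnotidentical} to the sequence $(X_k^n)_k$ then produces the desired moderate deviation principle with speed $a_n^2$ and rate function $x^2/2$ for the normalized centered sum $(\chi_n(D_n)-\E\chi_n(D_n))/\sqrt{\V\chi_n(D_n)}$ scaled by $1/a_n$.

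The main obstacle is matching the scaling range claimed here, $a_n\ll(\sum_k\sigma_k^2)^{1/2}/\max_k\sigma_k$, with the range actually furnished by Theorem~\ref{thmnotidentical}: with $K=1$, $\gamma=0$, and $\sigma_k\le 1/2$, the latter delivers only $a_n\ll(\sum_k\sigma_k^2)^{1/2}/2$, which is strictly weaker whenever $\max_k\sigma_k<1/2$. To recover the full claimed range one should bypass the generic moment-to-cumulant step and bound the cumulants of $Z_n$ directly via the independence identity $\Gamma_j(\sum_k X_k^n)=\sum_k\Gamma_j(X_k^n)$, using Bernoulli-specific cumulant estimates that capture the Lindeberg-type scale $\max_k\sigma_k$, and then feed the resulting $\Delta_n=(\sum_k\sigma_k^2)^{1/2}/\max_k\sigma_k$ into Theorem~\ref{thmcumulants}. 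A subordinate technical point, namely extending the argument from finite to possibly countably many eigenvalues, is immediate from the trace-class hypothesis $\sum_k\lambda_k^n<\infty$, which guarantees almost-sure convergence of $\sum_k X_k^n$ and convergence of cumulants under truncation.
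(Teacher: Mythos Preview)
Your strategy—reduce to independent centered Bernoullis via the Hough--Krishnapur--Peres--Vir\'ag representation and then invoke Theorem~\ref{thmnotidentical}—is exactly the paper's. The paper's proof consists of two lines: apply Theorem~\ref{thmnotidentical}, and observe that condition~\eqref{momentenbedingungen} holds with $\gamma=0$ and $K_n=1$.

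There is one difference worth flagging. You take $X_k^n:=\xi_k^n-\lambda_k^n$, so that the $Z_n$ of Theorem~\ref{thmnotidentical} coincides with the statistic in the present statement. The paper instead takes the \emph{standardized} variables $X_k^n:=(\xi_k^n-\lambda_k^n)/\sqrt{\lambda_k^n(1-\lambda_k^n)}$, so $\sigma_i\equiv 1$. With that choice, however, the $Z_n$ of Theorem~\ref{thmnotidentical} is $n^{-1/2}\sum_k X_k^n$, which is \emph{not} the quantity in the theorem, and the claim $K_n=1$ fails when some $\lambda_k^n$ is near $0$ or $1$ (e.g.\ $\E(X_k^n)^4=[(1-\lambda)^3+\lambda^3]/[\lambda(1-\lambda)]$ is unbounded). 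Your un-normalized choice is the correct one here.

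Your observation about the scaling range is also correct, and the paper does not resolve it. A direct application of Theorem~\ref{thmnotidentical} with $X_k^n=\xi_k^n-\lambda_k^n$, $K=1$, $\gamma=0$ only yields $a_n\ll\tfrac12\sqrt{\V(\chi_n(D_n))}$, since $\max\{K,\max_k\sigma_k\}=1$; the sharper denominator $\max_k\sigma_k$ in the statement is not justified by the paper's argument either. Your suggested fix via direct Bernoulli cumulant bounds is a natural idea, but note that $\Gamma_j(\xi_k-\lambda_k)$ is of order $\sigma_k^2$ rather than $\sigma_k^j$, so obtaining $\Delta_n=\sqrt{\sum_k\sigma_k^2}/\max_k\sigma_k$ would still require a non-obvious estimate. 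In any case, the range $a_n\ll\sqrt{\V(\chi_n(D_n))}$ that you do establish is precisely what is used in all the examples following the theorem (where one simply invokes $\max_k\sigma_k\le\tfrac12$), so your argument suffices for every application in the paper.
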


\begin{remark}
Obviously we have $\max_{1 \le i \le n} (\lambda_i^n(1-\lambda_i^n))^{1/2} \leq \frac 12$. To assure that $(a_n)_n$ is growing to infinity,
it is necessary that $\V (\chi_n(D_n))$ goes to infinity. Moreover, under the assumptions of Theorem \ref{mdpDDP},
$$
\V (\chi_n(D_n)) = \sum_k \lambda_k^n (1- \lambda_k^n) \leq \sum_k \lambda_k^n = \int K_n(x,x) d \mu_n(x),
$$
thus for a moderate deviation principle, it is necessary that $\lim_{n \to \infty} \int_{D_n} K_n(x,x) d\mu_n(x) = + \infty$.
\end{remark}

\begin{proof}[Proof of Theorem \ref{mdpDDP}]
We only have to check a moderate deviation principle for the rescaled partial sums of independent Bernoulli random variables $\xi_k$
with $P(\xi_k =1) = \lambda_k$. Therefore we apply Theorem \ref{thmnotidentical}. Take $X_k^n := \frac{\xi_k - \lambda_k^n}{\sqrt{\lambda_k^n(1- \lambda_k^n)}}$.
Then we obtain easily that condition \eqref{momentenbedingungen} is satisfied for $X_k^n$ with $\gamma=0$ and a constant $K_n=1$.
\end{proof}

\begin{example}[{\it Eigenvalues of the GUE/GOE}]
Let $D=[-a,b]$ with $a,b>0$ and $\alpha \in (-\frac 12, \frac 12)$, and $D_n := n^{\alpha} D$. Consider the determinantal point process of Example
\ref{GUEDDP}. Then $Z_n / a_n$ satisfies a moderate deviation principle; see \cite[4.2.27]{Zeitounibook}, where $\V( \chi_n(D_n)) \to \infty$ is proved applying
an upper bound with the help of the sine-kernel. Note, that the same conclusions hold when the GUE is replaced by the GOE (Gaussian orthogonal ensembles), see
\cite[4.2.29]{Zeitounibook}.
\end{example}

\begin{example}[{\it Sine-, Airy- and Bessel point processes}]
Recall the sine-kernel $K_{sine}(x,y)= \frac{1}{\pi} \frac{\sin(x-y)}{x-y}$ which arises as the limit of many interesting
point processes, for example as a scaling limit in the bulk of the spectrum in the GUE. 
With $\Lambda = {\Bbb R}$ and $\mu$ to be the Lebesgue measure, the corresponding operator is locally admissible
and determines a determinantal point process on ${\Bbb R}$. The operator is not of trace class but locally of trace class.
For $D_n = [-n,n]$, consider $K_n = 1_{D_n} K_{sine}$. The Central Limit Theorem for the rescaled $\chi_n(D_n)$ was proved by Costin and Lebowitz
in 1995. They proved that  $\V (\chi_n(D_n))$ goes to infinity. Hence a moderate deviation principle for the appropriately rescaled 
sine kernel process follows. It was shown in \cite{Soshnikov:2000}, that the condition $\lim_{n \to \infty} \V (\chi_n(D_n))= +\infty$
is satisfied for the Airy kernel $K_{Airy}$ with $D_n=[-n,n]$, and for Bessel kernel $K_{Bessel}$ with $D_n=[-n,n]$. In these cases, the growth
of $\V(\chi_n(D_n))$ is logarithmic with respect to the mean number of points in $D_n$. For a proof that the Airy process has a locally admissible
kernel which determines a determinantal point process, see \cite[4.2.30]{Zeitounibook}.
The Airy kernel arises as a scaling limit at the edge of the spectrum in the GUE and at the soft right edge of the spectrum in the Laguerre ensemble, 
while the Bessel kernel arises as a scaling limit at the hard left edge in the Laguerre ensemble. We conclude a moderate deviation principle
for the corresponding kernel point processes. For details and more examples like families of kernels corresponding to random matrices
for the classical compact groups, see \cite{Soshnikov2:2000}.
\end{example}

\section{Proof of Theorem \ref{thmcumulants}}\label{proofsection}
The following lemma is an essential element of the proof of
Theorem \ref{thmcumulants}. Rudzkis, Saulis and Statulevi{\v{c}}ius showed in 1978, that
condition \eqref{eqcumulants} on the cumulants implies the
following large deviation probabilities:

\begin{lemma}\label{lemmaRSS}
Let $Z$ be a centered random variable with variance one and existing
absolute moments, which satisfies 
$$
\bigl| \Gamma_j \bigr| \leq (j!)^{1+\gamma} / \Delta^{j-2}
\quad\text{for all } j=3,4, \dots
$$
for fixed $\gamma\geq 0$ and $\Delta>0$.
Then
\begin{eqnarray*}
\frac{P(Z\geq x)}{1-\Phi(x)}&=&
\exp\bigl( L_{\gamma}(x)\bigr)
\Bigl(1+q_1 \psi(x) \frac{x+1}{\Delta_{\gamma}}\Bigr)
\\\text{and }\quad
\frac{P(Z\leq- x)}{\Phi(-x)}&=&
\exp\bigl( L_{\gamma}(-x)\bigr)
\Bigl(1+q_2 \psi(x) \frac{x+1}{\Delta_{\gamma}}\Bigr)
\end{eqnarray*}
hold in the interval $0\leq x < \Delta_{\gamma}$,
using the following notation:
\begin{eqnarray}
\Delta_{\gamma}
&=& \frac{1}{6}\left( \frac{\sqrt{2}}{6} \Delta\right)^{1/(1+2\gamma)}
\nonumber
\\
\psi(x)
&=& \frac{60 \left(1+ 10 \Delta_{\gamma}^2 \exp\bigl( -(1-x/\Delta_{\gamma}) \sqrt{\Delta_{\gamma}}\bigr)\right)}{1-x/\Delta_{\gamma}}\,,
\label{eqpsi}
\end{eqnarray}
$q_1,q_2$ are two constants in the interval $[-1,1]$
and $L_{\gamma}$ is a function
(defined in \cite[Lemma 2.3, eq. (2.8)]{SaulisStratulyavichus:1989}) satisfying
\begin{equation}\label{eqLgamma}
\bigl| L_{\gamma}(x)\bigr| \leq \frac{|x|^3}{3 \Delta_{\gamma}}
\text{ for all $x$ with } |x|\leq \Delta_{\gamma}\,.
\end{equation}
\end{lemma}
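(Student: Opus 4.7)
The plan is to prove this via the classical Cramér conjugate (exponential tilting) method combined with a careful Edgeworth-type expansion under the tilted law. First I would study the cumulant generating function $K(s) := \log \E[e^{sZ}]$. Using $\Gamma_1=0$, $\Gamma_2=1$, and hypothesis \eqref{eqcumulants}, the series $K(s)=\frac{s^2}{2}+\sum_{j\ge 3}\Gamma_j s^j/j!$ converges absolutely in a disk whose radius is comparable to $\Delta^{1/(1+2\gamma)}$ (after absorbing the factor $(j!)^{\gamma}$); from this I derive that $K'$ is strictly increasing and $K''(s)\ge \frac{1}{2}$ on a real interval of half-length comparable to $\Delta_\gamma$, the quantity chosen precisely in \eqref{eqpsi} so that every remainder estimate goes through.

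Second, for each fixed $x$ with $0\le x<\Delta_\gamma$ I solve the saddle-point equation $K'(s)=x$ for a unique real $s=s(x)\ge 0$ (by strict monotonicity) and introduce the Esscher tilt $d\tilde P/dP=e^{sZ-K(s)}$, under which $Z$ has mean $x$, variance $K''(s)$, and inherits the cumulant bound \eqref{eqcumulants} up to controlled modifications. Writing
\[
P(Z\ge x) \;=\; e^{K(s)-sx}\,\tilde\E\!\left[e^{-s(Z-x)}\,\mathbf 1_{\{Z\ge x\}}\right],
\]
the exponential prefactor is exactly the source of the Cramér series $L_\gamma$ via the identity
\[
K(s(x))-s(x)\,x \;=\; -\frac{x^2}{2}+L_\gamma(x).
\]
I would obtain \eqref{eqLgamma} by expanding $s(x)=x+\sum_{k\ge 2}c_k x^k$ through Lagrange inversion, substituting into $K(s)-sx$, and estimating each coefficient by the cumulant bound, so that the leading contribution is cubic in $x$ with denominator proportional to $\Delta_\gamma$.

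Third, to treat the residual expectation under $\tilde P$, I apply a short-range Edgeworth expansion centred at $x$ and scaled by $\sqrt{K''(s)}$. The leading Gaussian term reproduces $\int_0^\infty e^{-su-u^2/(2K''(s))}\,du/\sqrt{2\pi K''(s)}$; comparing this integral with the Mills-ratio expansion of $1-\Phi(x)$ (using $(1-\Phi(x))\ge\frac{1}{\sqrt{2\pi}(x+1)}e^{-x^2/2}$) yields the main factor $\exp(L_\gamma(x))$, while the higher-order Edgeworth terms, controlled by the cumulant bound for $Z$ under $\tilde P$, produce the multiplicative correction of the form $1+q_1\psi(x)(x+1)/\Delta_\gamma$ with $|q_1|\le 1$. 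The left-tail statement is obtained by applying the same argument to $-Z$, whose cumulants satisfy $\Gamma_j(-Z)=(-1)^j\Gamma_j(Z)$ and hence the same bound, and by using $P(Z\le -x)=P(-Z\ge x)$.

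The main technical obstacle is tracking the precise form of the error term, in particular the factor $10\,\Delta_\gamma^2\exp\bigl(-(1-x/\Delta_\gamma)\sqrt{\Delta_\gamma}\bigr)$ inside $\psi(x)$. This forces me to keep enough terms in the Edgeworth expansion of the residual integral, and at the same time to control the tail of the remainder via the cumulant bounds (rather than the easier but weaker moment bounds), in such a way that the remaining contribution from the region where $s(Z-x)$ is of order $\sqrt{\Delta_\gamma}$ decays like the stated exponential. The explicit numerical constants ($1/6$, $60$, $10$, $3$) are book-keeping outcomes of these estimates; everything conceptually non-trivial is already encoded in the analyticity of $K$ on a disk of radius $\sim\Delta^{1/(1+2\gamma)}$ that follows from \eqref{eqcumulants}.
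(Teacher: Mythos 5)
Your plan founders at its very first step, and the failure is specific to the regime that makes this lemma hard, namely $\gamma>0$. From $|\Gamma_j|\leq (j!)^{1+\gamma}/\Delta^{j-2}$ the terms of the formal cumulant series satisfy $|\Gamma_j s^j/j!|\leq (j!)^{\gamma}\Delta^2(|s|/\Delta)^j$, and for any fixed $s\neq 0$ the factor $(j!)^{\gamma}$ beats the geometric factor, so the series has radius of convergence zero; there is no disk of radius comparable to $\Delta^{1/(1+2\gamma)}$ (or to anything) on which $K(s)=\log\E[e^{sZ}]$ is given by a convergent cumulant expansion, and the factor $(j!)^{\gamma}$ cannot be ``absorbed.'' Worse, the hypothesis does not even guarantee that $\E[e^{sZ}]$ is finite for any $s>0$: cumulant bounds with $\gamma>0$ correspond to moments of order $(j!)^{1+\gamma}$, i.e.\ to stretched-exponential rather than exponential tails (this is exactly the situation in the paper's applications, where $\gamma=2$). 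Consequently the saddle-point equation $K'(s)=x$, the Esscher tilt $d\tilde P/dP=e^{sZ-K(s)}$, the identity $K(s(x))-s(x)x=-x^2/2+L_\gamma(x)$, and the Edgeworth analysis under $\tilde P$ are all unavailable in general; your argument would at best cover $\gamma=0$ (the classical Cram\'er--Statulevi\v{c}ius setting), where the cumulant series does converge for $|s|$ of order $\Delta$ and exponential moments exist. The shrinking of the admissible zone from order $\Delta$ to order $\Delta^{1/(1+2\gamma)}$ is precisely the price of not having exponential moments, and any proof must confront this; the machinery of Rudzkis, Saulis and Statulevi\v{c}ius does so by working with truncated cumulant expansions and conjugate constructions at the level of characteristic functions rather than by tilting the actual law, which is a genuinely different and substantially more delicate argument than the one you sketch.

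You should also be aware that the paper itself does not prove this lemma: it is quoted verbatim from Saulis and Statulevi\v{c}ius (Lemma 2.3 of their book), so there is no internal proof to match your attempt against. If you wish to supply a self-contained proof, you would either have to restrict to $\gamma=0$ (where your outline is conceptually sound, though the explicit constants $1/6$, $60$, $10$ and the precise shape of $\psi$ still require the full book-keeping) or reproduce the considerably heavier general-$\gamma$ analysis of the original reference.
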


For the proof see \cite[Lemma 2.3]{SaulisStratulyavichus:1989}.

\begin{lemma}\label{lemmaERS}
In the situation of Lemma \ref{lemmaRSS} there exist two constants
$C_1(\gamma)$ and $C_2(\gamma)$, which depend only on $\gamma$ and
satisfy the following inequalities:
\begin{eqnarray*}
&&\left| \log \frac{P(Z\geq x)}{1-\Phi(x)} \right| \leq C_2(\gamma) \frac{1+x^3}{\Delta^{1/(1+2\gamma)}} \\
\text{and}&& \left| \log \frac{P(Z\leq -x)}{\Phi(-x)} \right| \leq C_2(\gamma) \frac{1+x^3}{\Delta^{1/(1+2\gamma)}}
\end{eqnarray*}
for all $0\leq x\leq C_1(\gamma) \Delta^{1/(1+2\gamma)}$.
\end{lemma}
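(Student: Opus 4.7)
The plan is to apply Lemma \ref{lemmaRSS} directly: taking the logarithm of both sides of the identity for $P(Z\geq x)/(1-\Phi(x))$ gives
$$
\log \frac{P(Z\geq x)}{1-\Phi(x)} \;=\; L_{\gamma}(x) \;+\; \log\Bigl(1 + q_1 \psi(x)\,\tfrac{x+1}{\Delta_{\gamma}}\Bigr),
$$
valid for $0 \leq x < \Delta_{\gamma}$, so I only have to bound each of the two summands on the right by a constant (depending on $\gamma$) times $(1+x^3)/\Delta^{1/(1+2\gamma)}$, and then do the same for the left tail. Since $\Delta_{\gamma}$ equals an explicit constant times $\Delta^{1/(1+2\gamma)}$, bounding by $(1+x^3)/\Delta_{\gamma}$ is equivalent to the desired conclusion up to a factor that can be absorbed into $C_2(\gamma)$.

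For the first summand, inequality \eqref{eqLgamma} immediately gives $|L_{\gamma}(x)| \leq x^3/(3\Delta_{\gamma})$ whenever $|x|\leq \Delta_{\gamma}$, which is already of the required form. For the second summand the key step is to control $\psi(x)$. I will restrict $x$ to $0 \leq x \leq \Delta_{\gamma}/2$, so that $1-x/\Delta_{\gamma}\geq 1/2$. On this range, the exponential factor in \eqref{eqpsi} satisfies $\Delta_{\gamma}^2 \exp(-(1-x/\Delta_{\gamma})\sqrt{\Delta_{\gamma}}) \leq \Delta_{\gamma}^2 e^{-\sqrt{\Delta_{\gamma}}/2}$, and the elementary observation $\sup_{t>0} t^2 e^{-\sqrt{t}/2} < \infty$ shows that this quantity is bounded by a universal constant. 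Hence $\psi(x)\leq K_0$ uniformly in $\Delta_{\gamma}$ and in $x\in[0,\Delta_{\gamma}/2]$ for an absolute constant $K_0$.

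Now I choose $C_1(\gamma)$ small enough that the condition $0\leq x \leq C_1(\gamma)\Delta^{1/(1+2\gamma)}$ implies both $x\leq \Delta_{\gamma}/2$ and $K_0(x+1)/\Delta_{\gamma} \leq 1/2$; the latter uses the fact that $\Delta_{\gamma}$ is proportional to $\Delta^{1/(1+2\gamma)}$, so taking $C_1(\gamma)$ to be a suitable small multiple (depending only on $\gamma$) of the proportionality constant suffices. On this range $|q_1 \psi(x)(x+1)/\Delta_{\gamma}|\leq 1/2$, so by the elementary inequality $|\log(1+u)|\leq 2|u|$ for $|u|\leq 1/2$,
$$
\Bigl|\log\bigl(1+q_1 \psi(x)\tfrac{x+1}{\Delta_{\gamma}}\bigr)\Bigr| \;\leq\; 2K_0\,\frac{x+1}{\Delta_{\gamma}} \;\leq\; 4K_0\,\frac{1+x^3}{\Delta_{\gamma}},
$$
using $x+1\leq 2(1+x^3)$ for all $x\geq 0$. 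Combining this with the bound on $L_{\gamma}(x)$ and translating $\Delta_{\gamma}$ back into $\Delta^{1/(1+2\gamma)}$ yields the first inequality of the lemma with an appropriate $C_2(\gamma)$. The second inequality is obtained by exactly the same argument applied to the left-tail identity of Lemma \ref{lemmaRSS}.

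The only real obstacle is the uniform control of $\psi$: since $\Delta_{\gamma}$ may be small or large, neither regime is automatic, and the bound hinges on the simple observation that $t^2 e^{-\sqrt{t}/2}$ is globally bounded on $(0,\infty)$. Everything else is bookkeeping in the choice of $C_1(\gamma)$ and $C_2(\gamma)$.
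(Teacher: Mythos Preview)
Your overall strategy matches the paper's: bound $\psi$ uniformly on a sub-interval of $[0,\Delta_\gamma)$, then control $\log(1+u)$ for small $|u|$ and combine with \eqref{eqLgamma}. The bound $\sup_{t>0}t^2 e^{-\sqrt{t}/2}<\infty$ is exactly the observation the paper uses to get a uniform bound on $\psi$.

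There is, however, a genuine gap in your choice of $C_1(\gamma)$. You claim that taking $C_1(\gamma)$ small forces $K_0(x+1)/\Delta_\gamma \leq 1/2$ on the whole range $0\le x\le C_1(\gamma)\Delta^{1/(1+2\gamma)}$. This is false: at $x=0$ the condition reads $K_0/\Delta_\gamma\le 1/2$, i.e.\ $\Delta_\gamma\ge 2K_0$, which is a lower bound on $\Delta_\gamma$ and cannot be enforced by shrinking $C_1(\gamma)$. The ``$+1$'' in $x+1$ does not scale with $C_1(\gamma)$, so for small $\Delta$ (hence small $\Delta_\gamma$) your inequality $|q_1\psi(x)(x+1)/\Delta_\gamma|\le 1/2$ simply fails, and with it the step $|\log(1+u)|\le 2|u|$.

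The paper deals with this by an explicit case split on the size of $\Delta^{1/(1+2\gamma)}$. When $\Delta^{1/(1+2\gamma)}$ exceeds a fixed threshold (there, $3c_6$), one restricts $x$ to $[0,\Delta^{1/(1+2\gamma)}/(3c_6)]$ so that $c_6(1+x)/\Delta^{1/(1+2\gamma)}\le 2/3$, and then bounds $|\log(1\pm y)|$ linearly in $y$ on $[0,2/3]$; the paper also uses $3(1+x)\le 5+x^3$ to convert to the $1+x^3$ form. When $\Delta^{1/(1+2\gamma)}$ lies below the threshold, the right-hand side $C_2(\gamma)(1+x^3)/\Delta^{1/(1+2\gamma)}$ is bounded below by a positive constant depending only on $\gamma$, so the inequality can be arranged by enlarging $C_2(\gamma)$. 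Your argument becomes correct once you insert this dichotomy; as written it covers only the large-$\Delta$ regime.
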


\begin{proof}
In \cite{ERS:2009} these bounds were concluded from the previous Lemma \ref{lemmaRSS}.
The proof here is analogue to the proof of \cite[Corollary 3.1]{ERS:2009}.
In the situation of Lemma \ref{lemmaRSS} the function $\psi$
defined in \eqref{eqpsi} is bounded by
$\psi(x) \leq c_1+ c_2\Delta_{\gamma}^2 \exp\bigl(-c_3\sqrt{\Delta_{\gamma}}\bigr)$
for all $0\leq x\leq q \Delta_{\gamma}$ for any fixed constant $q\in[0,1)$
and some positive constants $c_1,c_2$ and $c_3$ depending on $q$ only.
The term $c_1+ c_2\Delta_{\gamma}^2 \exp\bigl(-c_3\sqrt{\Delta_{\gamma}}\bigr)$
can be bounded uniformly in
$\Delta_{\gamma}
=\frac{1}{6}\left( \frac{\sqrt{2}}{6} \Delta\right)^{1/(1+2\gamma)}$,
which combined with the estimation \eqref{eqLgamma} implies the existence
of universal positive constants $c_4,c_5$ and $c_6$, such that
$$
\exp\Bigl( \frac{-c_5 x^3}{\Delta^{1/(1+2\gamma)}}\Bigr)
\Bigl( 1-\frac{c_6(1+x)}{\Delta^{1/(1+2\gamma)}}\Bigr)
\leq  \frac{P(Z\geq x)}{1-\Phi(x)} \leq
\exp\Bigl( \frac{c_5 x^3}{\Delta^{1/(1+2\gamma)}}\Bigr)
\Bigl( 1+\frac{c_6(1+x)}{\Delta^{1/(1+2\gamma)}}\Bigr)
$$
holds for all $0\leq x \leq c_4 \Delta^{1/(1+2\gamma)}$.
If $\Delta^{1/(1+2\gamma)}\leq 3 c_6$, we can choose $C_1(\gamma)$
and $C_2(\gamma)$ such that the first inequality in Lemma
\ref{lemmaERS} is satisfied.
In the case $\Delta^{1/(1+2\gamma)}> 3 c_6$ we have for all
$0\leq x \leq \frac{\Delta^{1/(1+2\gamma)}}{3 c_6}$
$$
\frac{c_6(1+x)}{\Delta^{1/(1+2\gamma)}} \leq
\frac{c_6}{\Delta^{1/(1+2\gamma)}} +\frac{1}{3}
\leq \frac{2}{3}\,.
$$
If $\Delta^{1/(1+2\gamma)}> 3 c_6$ and
$0\leq x \leq \frac{\Delta^{1/(1+2\gamma)}}{3 c_6}$ hold,
we can bound
\begin{equation*}
\left| \log \frac{P(Z\leq -x)}{\Phi(-x)} \right| \leq \frac{c_5 x^3}{\Delta^{1/(1+2\gamma)}}
+ \max\left\{ 
\Bigl|\log\Bigl( 1-\frac{c_6(1+x)}{\Delta^{1/(1+2\gamma)}}\Bigr)\Bigr|;
\Bigl|\log\Bigl( 1+\frac{c_6(1+x)}{\Delta^{1/(1+2\gamma)}}\Bigr)\Bigr|
\right\}\,.
\end{equation*}
Due to the concavity of the logarithm the absolute value of the straight line
\begin{eqnarray*}
g(x)=\frac{3 \log 3}{2} x - \frac{3 \log 3}{2}
&&\hspace{6.5cm}
\end{eqnarray*}
is bigger or equal
than the absolute value of $\log(x)$ for any
$\frac{1}{3}\leq x \leq \frac{5}{3}$. And we have
\begin{eqnarray*}
|\log(1-y)|\leq \frac{\log 3}{3/2} y = \frac{3 \log 3}{2} y
&&\hspace{6.5cm}
\end{eqnarray*}
and $|\log(1+y)|\leq y$
for any $0\leq y \leq \frac{2}{3}$.
Thus for $\Delta^{1/(1+2\gamma)}> 3 c_6$ and
$0\leq x \leq \frac{\Delta^{1/(1+2\gamma)}}{3 c_6}$ it follows that
\begin{equation*}
\left| \log \frac{P(Z\leq -x)}{\Phi(-x)} \right|
\leq \frac{c_5 x^3}{\Delta^{1/(1+2\gamma)}}
+ \frac{3 \log 3}{2} \frac{c_6(1+x)}{\Delta^{1/(1+2\gamma)}}
\leq \frac{c_5 x^3}{\Delta^{1/(1+2\gamma)}}
+ \frac{\log 3}{2} \frac{c_6(5+x^3)}{\Delta^{1/(1+2\gamma)}}\,,
\end{equation*}
applying $x^3-3x+2=(x-1)^2(x+2)\geq 0$ which is equivalent to
$3(1+x)\leq 5+x^3$.
Thus the first inequality in Lemma \ref{lemmaERS} is proved.
The second inequality in Lemma \ref{lemmaERS} can be proved similarly.
\end{proof}

\begin{proof}[Proof of Theorem \ref{thmcumulants}]
The idea of the proof is similarly to the proof of
\cite[Lemma 3.6]{ERS:2009} for the case of bounded geometric functionals.
It follows from Lemma \ref{lemmaERS} that in the situation of Theorem
\ref{thmcumulants} there exist two constants $C_1(\gamma)$ and $C_2(\gamma)$,
which satisfy the following inequalities:
\begin{eqnarray*}
&&\left| \log \frac{P(Z_n\geq y)}{1-\Phi(y)} \right| \leq C_2(\gamma) \frac{1+y^3}{\Delta_n^{1/(1+2\gamma)}} \\
\text{and}&& \left| \log \frac{P(Z_n\leq -y)}{\Phi(-y)} \right| \leq C_2(\gamma) \frac{1+y^3}{\Delta_n^{1/(1+2\gamma)}}
\end{eqnarray*}
for all $0\leq y\leq C_1(\gamma) \Delta_n^{1/(1+2\gamma)}$.
The logarithm can be represented as
\begin{eqnarray*}
\log \frac{P\left(\frac{1}{a_n} Z_n\geq x\right)}{1-\Phi(a_n x)}
&=& \log \frac{P\left(\frac{1}{a_n}Z_n\geq x\right)}{e^{\frac{(a_n x)^2}{2}} \bigl(1-\Phi(a_n x)\bigr)} e^{\frac{(a_n x)^2}{2}} \\
&=& \log P\left(\frac{1}{a_n}Z_n\geq x\right) +\frac{(a_n x)^2}{2}
-\log\Bigl({e^{\frac{(a_n x)^2}{2}} \bigl(1-\Phi(a_n x)\bigr)}\Bigr) \, .
\end{eqnarray*}
For the term at the left-hand side we can use the bounds
provided by Lemma \ref{lemmaERS} for
$y=a_n x$ and $0\leq x \leq C_1(\gamma) \frac{\Delta_n^{1/(1+2\gamma)}}{a_n}$.
Note that the bound for $x$ grows to infinity as $n$ does, thus it does not
imply any restriction.
Since, for all $y\geq 0$, we have
$$
\frac{1}{2+\sqrt{2\pi} y} \leq e^{\frac{y^2}{2}} \bigl(1-\Phi(y)\bigr) \leq \frac{1}{2}
$$
the monotonicity of the logarithm implies
\begin{eqnarray*}
\left|\log P\left(\frac{1}{a_n}Z_n\geq x\right) + \frac{(a_n x)^2}{2}\right|
&\leq& \left|\log\Bigl({e^{\frac{(a_n x)^2}{2}} \bigl(1-\Phi(a_n x)\bigr)}\Bigr)\right| + C_2(\gamma) \frac{1+(a_n x)^3}{\Delta_n^{1/(1+2\gamma)}}
\\
&\leq& \left|\log\left(\frac{1}{2+\sqrt{2\pi}a_n x}\right)\right| + C_2(\gamma) \frac{1+(a_n x)^3}{\Delta_n^{1/(1+2\gamma)}}
\\
&\leq& \log\bigl(2+\sqrt{2\pi}a_n x\bigr) + C_2(\gamma) \frac{1+(a_n x)^3}{\Delta_n^{1/(1+2\gamma)}}\, .
\end{eqnarray*}
And it follows that
\begin{eqnarray*}
&&\left|\frac{1}{a_n^2} \log P\left(\frac{1}{a_n}Z_n\geq x\right) + \frac{x^2}{2}\right|
\leq \frac{1}{a_n^2}  \log\bigl(2+\sqrt{2\pi}a_n x\bigr)
+ C_2(\gamma) \frac{1+(a_n x)^3}{a_n^2 \Delta_n^{1/(1+2\gamma)}}\\
&=&  \frac{1}{a_n^2}  \log(2+\sqrt{2\pi}a_n x)
+C_2(\gamma) \left(\frac{1}{a_n^2 \Delta_n^{1/(1+2\gamma)}}+\frac{a_n}{\Delta_n^{1/(1+2\gamma)}} x^3\right)
\stackrel{n\to\infty}{\longrightarrow} 0\, .
\end{eqnarray*}
Similarly we can prove
\begin{align*}
\left|\frac{1}{a_n^2} \log P\left(\frac{1}{a_n}Z_n\leq -x\right) + \frac{x^2}{2}\right|
&\leq \frac{1}{a_n^2}  \log\bigl(2+\sqrt{2\pi}a_n x\bigr)
+ C_2(\gamma) \frac{1+(a_n x)^3}{a_n^2 \Delta_n^{1/(1+2\gamma)}}\\
&\stackrel{n\to\infty}{\longrightarrow} 0\, .
\end{align*}
These bounds can be carried forward to a full moderate deviation principle
analogue to the proof of \cite[Theorem 1.2]{ERS:2009}.
\end{proof}

\newcommand{\SortNoop}[1]{}\def\cprime{$'$}
\providecommand{\bysame}{\leavevmode\hbox to3em{\hrulefill}\thinspace}
\providecommand{\MR}{\relax\ifhmode\unskip\space\fi MR }
\providecommand{\MRhref}[2]{%
  \href{http://www.ams.org/mathscinet-getitem?mr=#1}{#2}
}
\providecommand{\href}[2]{#2}

\end{document}